\documentclass[12pt,twoside]{article} 
\usepackage[latin1]{inputenc}
\usepackage{graphicx}
\usepackage{enumitem}
\usepackage{latexsym}
\usepackage{amssymb}
\usepackage{epsfig}
\usepackage{xcolor}
\usepackage{amsmath}
\usepackage{float}
\allowdisplaybreaks 
\usepackage[round, sort]{natbib}

\numberwithin{equation}{section}

\newcommand{\Var}{\mbox{Var}}
\def\eps{\varepsilon}
\def \R{\mathbb{R}}
\def \N{\mathbb{N}}

\def\P{\mathbb{P}}

\def\er{\mathbb{R}}

\def\e{\varepsilon}

\def\beq{\begin{eqnarray*}}
\def\eeq{\end{eqnarray*}}
\def\farc{\frac}

\begin{document}

\title{\bf Analysis of gradual changes in nonparametric regression based on a new optimization method in the non-unique case
}

\author{ Marie Hu\v{s}kov\'{a}\\ \small Department of Statistics\\\small Charles University of Prague\!\!\!\! \and Natalie Neumeyer \\ \small Department of Mathematics \\\small University of Hamburg\and  Leonie Selk
\\ \small Department of Mathematics \\\small University of Hamburg}

\maketitle

\newtheorem{theo}{Theorem}[section]
\newtheorem{lemma}[theo]{Lemma}
\newtheorem{cor}[theo]{Corollary}
\newtheorem{rem}[theo]{Remark}
\newtheorem{prop}[theo]{Proposition}
\newtheorem{defin}[theo]{Definition}
\newtheorem{example}[theo]{Example}
\newtheorem{Assumption}{Assumption}

\begin{abstract}
Consider a nonparametric regression model with one-dimensional covariates and a continuous regression function. Assume that the regression function from the left of the covariate support starts equal to zero and then changes at some unknown point. Our aim is to estimate this gradual change point. We define and compare various consistent estimators based on a new general optimization method in the case where the aim is to estimate the largest minimization point of some objective function. We discuss rates of convergence and estimating the regression function based on the gradual change structure. Bootstrap bias approximation is discussed. Further applications in a two sample case are considered, where two continuous regression functions first equal and then change at some point of interest.  
\end{abstract}

AMS 2020 Classification: Primary  62G10 
Secondary 62G05 
62G20 

Keywords and Phrases: argmin, bootstrap bias approximation, monotone rearrangement, smooth changes 


\bigskip

\small

{\bf Acknowledgement.}  Many thanks  to Zden\v{e}k Hl\'{a}vka and Stanislav Hu\v{s}ek for discussion on the application. 
\normalsize

\section{Introduction}

We suggest a new general consistent estimation method for the largest minimizing point of  a population objective (criterion) function in the non-unique case. We then apply the new method to obtain several consistent estimators of a gradual change point in a nonparametric regression model. The motivation was the comparison of two nonparametric regression models with scalar covariates. For simplicity assume the interval $[0,1]$ for the covariates. Assume that from the observed samples it is suggested that the two continuous regression functions are the same on $[0,\vartheta_0]$, but differ starting at the unknown point $\vartheta_0$. The aim is to estimate the change point $\vartheta_0$. In applications, one often aims to compare two groups, such as the height of girls and boys in relation to age, as examined by \cite{Hlavka-Huskova}. If the covariate values are the same in both samples, it is easier to consider the differences of the observations. Then one obtains a nonparametric regression model, where the continuous regression function is zero on $[0,\vartheta_0]$ and at least on some interval $(\vartheta_0,\vartheta_0 +\Delta)$ non-zero. There are practical applications for this as well. For instance in medical science the change point can be the threshold dose of a medication, which is the smallest amount needed to cause a reaction in the patient's body. \\
How to estimate $\vartheta_0$ some ideas were to modify estimators based on  corresponding parametric models. There is a wide range of literature about detecting and estimating gradual changes in parametric models as we will discuss below. However, modification of those methods to the nonparametric case does not work straightforward.  Often one obtains a general optimization method (we consider the minimum case), where $\vartheta_0$ is the  {\sl maximal} value {\sl minimizing} the population objective function function $M(\vartheta)$ 
over $\vartheta$.  Let the empirical objective function $M_n(\vartheta)$ be a consistent estimator for $M(\vartheta)$ uniformly in $\vartheta$. Although one assumes uniform 
convergence, one cannot apply the argmin theorem in the non-unique minimum case. Using the {\sl maximal} value {\sl minimizing} the function 
$M_n(\vartheta)$ over $\vartheta$ generally does not give a consistent estimator. 
If the empirical objective function is defined as an arithmetic mean the estimator is called $M$-estimator and the classical argmax (argmin) theorems for consistency and asymptotic normality can be found for example in \cite{vanderVaart} or \cite{Kosorok}. But uniqueness of the argument of the maximization (minimization) is needed. In most of our applications the empirical objective function has a more complicated structure because it is based on some nonparametric estimator and the parameter $\vartheta$. In general this means that an infinite-dimensional nuisance parameter is estimated nonparametrically, and depending on the structure of empirical objective  function the procedure is called semiparametric $M$-estimation or in general semiparametric optimization. Asymptotic results under uniqueness assumption can be found in \cite{Kosorok}, \cite{Pakes-Pollard} or \cite{Chen-etal}, among others.  Asymptotic argmin results under non-uniqueness are given by
\cite{Ferger2021} and \cite{Ferger2024}, but are not applicable in our situation. 
As for our change point application we consider the case with parameter $\vartheta\in[0,1]$, where the function $\vartheta\mapsto M(\vartheta)$ is continuous, zero on $[0,\vartheta_0]$,  and strictly increasing on an interval $(\vartheta_0,\vartheta_0 +\Delta)$ with $\Delta>0$. We define a new estimator for $\vartheta_0$ and prove consistency. It is based on a formula related to monotone rearrangement of functions which are considered by \cite{Hardyetal} for instance. The new method is then applied to estimate the gradual (smooth) change point $\vartheta_0$ of a nonparametric regression model with several different objective functions $M$.
\\
Estimation of gradual change points in parametric location models were considered by \cite{Huskova-1998}, \cite{Horvath-Huskova}, and corresponding hypotheses tests by
\cite{Huskova-Steinebach-2000}, \cite{Huskova-Steinebach-2002} and \cite{Slaby}, among others. 
The parametric location model typically has the form $Y_i=\mu+\delta\cdot  ((i-m)/n)_+$ (with notation $z_+=\max(z,0)$) and can be interpreted as parametric regression model with fixed design points $x_i=i/n$ and change point $\vartheta_0$ with $m=\lfloor n\vartheta_0\rfloor$.  \cite{Jaruskova} generalized the form using polynomial regression function. 
Change point estimation for gradual changes in parametric regression models were considered by  \cite{Doering} in the fixed design case, and by
\cite{Doering-Jensen} for random covariates. In the random covariates case consistency of the change point estimator is also discussed in a misspecification context where the regression function does not belong to the parametric function class in \cite{Doering-SMSA}. The parametric regression formula is $(x-\vartheta_0)_+^q$ (with some generalizations). 
Detecting gradual changes in classical time series models were considered by \cite{Wang} and \cite{Vogt-Dette}, among others, and in functional time series models recently by \cite{Bastian-Dette} and \cite{Kirch-etal}.
\\
In Section \ref{sec2} we present the new general consistent estimation method. This is applied in Section \ref{sec3} for a gradual change point estimation in a nonparametric regression model. Here five different methods are discussed and consistency of the change point estimators is shown. In Section \ref{sec-m} nonparametric estimation of the regression function under shape constraints is discussed. A bootstrap bias reduction method and simulations are presented in Section \ref{sec-bootstrap}. Methods in the two-sample case are considered in Section \ref{sec-application} as well as a real data application. The proofs can be found in the appendix.

\section{General estimation method}\label{sec2}

For the general approach assume that $\vartheta_0$ is the maximal value for which a function $M(\vartheta)$ is minimized over $\vartheta\in [0,1]$ (i.e.\ $\vartheta_0=\sup\arg\min \{M(\vartheta)\mid\vartheta\in[0,1]\}$), and let $M_n(\vartheta)$ be a uniformly in $\vartheta$ consistent estimator for $M(\vartheta)$.
Although one assumes uniform convergence, one cannot apply the argmin theorem if the minimum of $M$ is not unique. And if one defines the estimator for $\vartheta_0$ as the maximal value for which $M_n(\vartheta)$ is minimized over $\vartheta\in [0,1]$, this will typically not lead to a consistent estimator (see Example \ref{example} below, the simulation Section \ref{sec-bootstrap}, or \cite{Ferger2024}). 
Let us assume that the minimum of $M(\vartheta)$ is equal to zero, and note that one can write 
\begin{eqnarray}\nonumber
\vartheta_0 &=& \inf\{\vartheta\in[0,1]\mid M(\vartheta)>0\}
\;=\; \sup\{\vartheta\in[0,1]\mid M(\vartheta)=0\}\\
&=& \int_0^1 I\{M(\vartheta)\leq 0\}\,d\vartheta,\label{M}
\end{eqnarray}
where $I\{\dots\}$ denotes the indicator function.
We define the estimator for the change point $\vartheta_0$ as
\begin{equation}\label{theta-hat}
\hat\vartheta_n= \int_0^1 I\{M_n(\vartheta)\leq t_n\}\,d\vartheta
\end{equation}
for a positive deterministic sequence $t_n\to0$. The following theorem gives consistency.

\begin{theo}\label{theo1}
Assume 
\begin{equation}\label{tn}
    \P\Big(\sup_{\vartheta\in [0,1]}|M_n(\vartheta)-M(\vartheta)|>t_n\Big)\longrightarrow 0\mbox{ for } n\to\infty,
\end{equation} 
and that  $M$ is continuous and non-decreasing on $[0,1]$,  zero on $[0,\vartheta_0]$, and strictly increasing on  $(\vartheta_0,\vartheta_0+\Delta)$ for some $\Delta>0$. Then  
$$\hat\vartheta_n=\vartheta_0+O_{\P}(\delta_n)$$ for $\delta_n=M^{-1}(2t_n)-\vartheta_0$. 
\end{theo}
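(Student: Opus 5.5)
On the event $A_n=\{\sup_{\vartheta\in[0,1]}|M_n(\vartheta)-M(\vartheta)|\le t_n\}$, which by (\ref{tn}) has $\P(A_n)\to1$, I will sandwich $\hat\vartheta_n$ deterministically. Since $\hat\vartheta_n=\vartheta_0+O_{\P}(\delta_n)$ only asks for $|\hat\vartheta_n-\vartheta_0|\le C\delta_n$ with probability tending to one, it is enough to show that on $A_n$, for all $n$ large enough,
$$\vartheta_0\le\hat\vartheta_n\le\vartheta_0+\delta_n .$$
This gives the claim with constant $C=1$. Here $M^{-1}$ is read as the inverse of $M$ restricted to $[\vartheta_0,\vartheta_0+\Delta)$, where $M$ is continuous and strictly increasing. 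Since $t_n\to0$, we have $2t_n<M(\vartheta_0+\Delta/2)$ for large $n$, so $M^{-1}(2t_n)$ is well defined, lies in $(\vartheta_0,\vartheta_0+\Delta)$, and $\delta_n>0$ with $\delta_n\to0$.

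\emph{Lower bound.} For $\vartheta\in[0,\vartheta_0]$ we have $M(\vartheta)=0$, so on $A_n$ we get $M_n(\vartheta)\le t_n$. Hence the integrand $I\{M_n(\vartheta)\le t_n\}$ equals one on $[0,\vartheta_0]$, and so $\hat\vartheta_n\ge\vartheta_0$.

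\emph{Upper bound.} For $\vartheta>\vartheta_0+\delta_n=M^{-1}(2t_n)$, monotonicity of $M$ gives $M(\vartheta)\ge 2t_n$. This holds inside $(\vartheta_0,\vartheta_0+\Delta)$ by strict increase and continuity, and beyond that interval because $M$ is non-decreasing. I also want the strict version $M(\vartheta)>2t_n$ wherever possible. On $(M^{-1}(2t_n),\vartheta_0+\Delta)$ it follows from strict increase. On $[\vartheta_0+\Delta,1]$ we have $M(\vartheta)\ge M(\vartheta_0+\Delta/2)>2t_n$ for large $n$. Then on $A_n$, $M_n(\vartheta)\ge M(\vartheta)-t_n>t_n$, so the integrand vanishes for $\vartheta>\vartheta_0+\delta_n$ (a null set at the boundary is irrelevant). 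Therefore $\hat\vartheta_n\le\vartheta_0+\delta_n$.

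The main obstacle is only the bookkeeping around the definition of $M^{-1}$ and the region $\vartheta\ge\vartheta_0+\Delta$, where $M$ need not be strictly increasing. Using non-decreasingness together with $t_n\to0$, as above, handles this. Combining the two bounds, $\P(0\le\hat\vartheta_n-\vartheta_0\le\delta_n)\ge\P(A_n)\to1$, which yields $\hat\vartheta_n=\vartheta_0+O_{\P}(\delta_n)$, and in particular consistency since $\delta_n\to0$.
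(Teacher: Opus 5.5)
Your proof is correct and follows essentially the same route as the paper. On the event $\{\sup_{\vartheta}|M_n(\vartheta)-M(\vartheta)|\le t_n\}$, the paper's first integral $\int_0^{\vartheta_0}I\{M_n(\vartheta)-M(\vartheta)>t_n\}\,d\vartheta$ vanishes, which is your lower bound, and its second integral is bounded by $\int_0^1 I\{0<M(\vartheta)\le 2t_n\}\,d\vartheta=\delta_n$, which is your upper bound. Your explicit sandwich $\vartheta_0\le\hat\vartheta_n\le\vartheta_0+\delta_n$ also makes visible that, with probability tending to one, the estimator can only overshoot. The paper's absolute-value bound contains this only implicitly.
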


For the notation note that $M$ is invertible on $[\vartheta_0,\vartheta_0+\Delta]$ with a continuous inverse function $M^{-1}$ on $[0, M(\vartheta_0+\Delta)]$, and for $n$ large enough $2t_n$ will be an element of $[0, M(\vartheta_0+\Delta)]$. We obtain $\delta_n=o(1)$ and thus consistency of the estimator.  
The proof of the theorem is given in the appendix. This result is very general. We will apply it to estimate gradual changes in a nonparametric regression function with independent data. But it could also be applied for time series data or in very different models and estimation problems. 

\begin{example}\label{example}\rm We consider a simple non-random example to show that the argmin procedure does not work, but the new suggested method. Let 
\begin{eqnarray*}
    M(\vartheta)&=&\left\{ \begin{array}{l} 0,\quad \vartheta\in [0,\frac12]\\ \vartheta-\frac12,\quad \vartheta\in (\frac12, 1]\end{array}\right. \\
    M_n(\vartheta)&=&
    \left\{ \begin{array}{l} \frac{\vartheta}{1+\frac{n}{2}},\quad \vartheta\in [0,\frac12+\frac1n]\\ \vartheta-\frac12,\quad \vartheta\in (\frac12+\frac1n, 1].\end{array}\right.
    \end{eqnarray*}
 Then $\sup_\vartheta|M_n(\vartheta)-M(\vartheta)|$   converges to zero with rate $\frac1n$, but $\vartheta_0=\max\arg\min_\vartheta M(\vartheta)=\frac12$, and $\max\arg\min_\vartheta M_n(\vartheta)=\arg\min_\vartheta M_n(\vartheta)=0$ for all $n$. If one chooses $t_n>\frac1n$, then 
 $$\int_0^1 I\{M_n(\vartheta)\leq t_n\}\,d\vartheta=\int_0^{\frac12+\frac1n}I\{\vartheta\leq t_n(1+\textstyle{\frac{n}{2}})\}\, d\vartheta+t_n-\frac1n=\frac12+t_n$$
 converges to $\vartheta_0=\frac12$ for $t_n\to 0$, $n\to\infty$. 
\end{example}

\begin{rem}\rm The definition of the estimator $\hat\vartheta_n$ is related to increasing rearrangements of functions. Consider the function 
$\Phi(f)(z)= \int_a^b I\{f(x)\leq z\}\, dx+a$ for $z\in\mathbb{R}$
defined for any measurable function $f:[a,b]\to\mathbb{R}$.  
For a strictly increasing function $f$, the function $\Phi(f)|_{[f(a),f(b)]}$ is just the inverse $f^{-1}$.  
The increasing rearrangement of a non-monotone function $h:[0,1]\to\er$ is defined as $\Phi(\Phi(h)|_{[h(0),h(1)]})|_{[0,1]}$. Large theory overviews for monotone rearrangements of functions can be found in \cite{Hardyetal} and \cite{Rakotoson2025}. The procedure has been applied in statistics  to estimate monotone functions. 
Our estimator has the form $\hat\vartheta_n=\Phi(M_n)(t_n)$.
In particular for the method (3) in Subsection \ref{increasing regression} suggested below, those rearrangements for estimating monotone nonparametric regression functions have been considered by \cite{Dette-etal2006}, \cite{Neumeyer2007} and \cite{AnevskiFougeres2019}.
 But those results are not  applicable in our case where we apply the generalized inverse of the rearrangement, and the regression function is zero on the interval $[0,\vartheta_0]$. 
\end{rem}

\begin{rem}\rm
The consistency proof of Theorem \ref{theo1} also works replacing the assumption ``$M$ is zero on $[0,\vartheta_0]$'' by ``$M\leq 0$  on $[0,\vartheta_0]$''. In the case $M(\vartheta)< 0$ for $\vartheta\in [0,\vartheta_0)$ this means one estimates the root of a non-decreasing continuous function $M$.
\end{rem}

\section{Gradual change point estimation}\label{sec3}

We consider a nonparametric regression model
\begin{equation}\label{model}
Y_i=m(X_i)+\eps_i,\quad i=1,\dots,n,
\end{equation}
with independent identically distributed random variables $(X_i,\varepsilon_i)$, $i=1,\dots,n$, where the support of the distribution of $X_i$ is $[0,1]$, and we assume $E[\eps_i\mid X_i]=0$, $\Var(\eps_i)=\sigma^2\in (0,\infty)$. We use the notation $\P^X$ for the distribution of $X_i$ and $F_X$ for the corresponding cdf, and assume a positive density $f_X$ on $[0,1]$.
We assume that the regression function $m$ is continuous with $m(x)\equiv 0$ on $[0,\vartheta_0]$, and $m\neq 0$ on some interval $(\vartheta_0,\vartheta_0 +\Delta)$ with  $\Delta \in (0,1-\vartheta_0]$. 
The gradual change point $\vartheta_0$ is unknown and of interest. For some of the different methods considered below more assumptions are needed.   

\begin{rem}\rm
A typical example of what $m$ could look like is given by $m(x)=(x-\vartheta_0)_+^q$ for some $q>0$. Those are e.\,g.\ considered in \cite{Doering-Jensen} and in \cite{Doering}, where also estimators for the degree of smoothness $q$ are proposed. In the paper at hand we do not assume this parametric form in general. However, in Subsections \ref{method4}, \ref{method5} and Section \ref{chap:regression} we propose change point estimation methods and a regression estimator that are based on $m(x)=g(x)(x-\vartheta_0)^q_+$ for some suitable function $g$. There, and also where we use $m(x)=(x-\vartheta_0)^q_+$ as an example, we assume that $q$ is known and refer to \cite{Doering-Jensen} for procedures to estimate $q$. In Subsection \ref{examplerates} we give the rates of our estimators with $m(x)=(x-\vartheta_0)^q_+$ for different values of $q$. It can be seen that the rate of convergence of the change point estimator improves with decreasing $q$, since smaller values of $q$ result in more distinct changes. The extreme case $q
=0$ corresponds to a jump in the regression function. The influence of the size of $q$ can also be seen in our simulation results, see Figure \ref{fig:compRegr}.
\end{rem}

\subsection{Several methods to estimate the change point}\label{sec:methods}

We now consider various possibilities to define $M$ and $M_n$ under different assumptions on the regression function $m$ on $(\vartheta_0,1]$. 

\subsubsection{Method (1): positive regression function}\label{sec-Mn-1}\label{method1}

Assume that the continuous $m$ is positive on some interval $(\vartheta_0,\vartheta_0 +\Delta)$ and non-negative on $[\vartheta_0 +\Delta,1]$. 
Let $$M(\vartheta)=\int_{\vartheta_0}^\vartheta m(x)\,dF_X(x)\, I\{\vartheta>\vartheta_0\},$$ which is the expectation of 
\begin{equation}\label{M_n-1}
M_n(\vartheta)=\frac{1}{n}\sum_{i=1}^n Y_iI\{X_i\leq\vartheta\},
\end{equation}
and fulfills the assumptions of Theorem \ref{theo1}. 
Combining Example 2.6.1 and  exercise 9 (p.\ 151) by \cite{vanderVaartWellner} it follows that the function class
$\{(x,y)\mapsto I\{x\leq\vartheta\}  \mid \vartheta\in[0,1]\}$ is a VC-subgraph-class. By Lemma 2.6.18(vi) in \cite{vanderVaartWellner} it follows that the class $\{(x,y)\mapsto y I\{x\leq\vartheta\}  \mid \vartheta\in[0,1]\}$ is a VC-subgraph-class. Thus this class is $P$-Donsker, where $P$ denotes the distribution of $(X_1,Y_1)$, by Theorem 2.8.3 of \cite{vanderVaartWellner}), because $E[Y_i^2]<\infty$. We derive that $\{\sqrt{n}(M_n(\vartheta)-M(\vartheta))\mid \vartheta\in[0,1]\}$ converges weakly to a centered Gaussian process, and one obtains validity of (\ref{tn}) with $t_n=n^{-1/2}/c_n$ for every positive sequence $c_n$ converging to zero. 
Then the estimator $\hat\vartheta_n$ defined in (\ref{theta-hat}) is consistent. 

\begin{example}\rm
    Assume $X_1\sim \mathcal{U}[0,1]$.

(1) Let $m(x)=(x-\vartheta_0)_+^q$ for $q>0$, then we obtain $M(\vartheta)= (\vartheta-\vartheta_0)_+^{q+1}/(q+1)$ and $M^{-1}(t)=\vartheta_0+((q+1)t)^{1/(q+1)}$  and the rate is $\delta_n\sim t_n^{1/(q+1)}$.

(2) Let $m(x)=2(x-\vartheta_0)e^{(x-\vartheta_0)^2}I\{x>\vartheta_0\}$, then $M(\vartheta)=(e^{(\vartheta-\vartheta_0)^2}-1)I\{\vartheta>\vartheta_0\}$ and $M^{-1}(t)=\vartheta_0+(\log(t+1))^{1/2}$ and the rate is $\delta_n=(\log(t_n+1))^{1/2}$.
\end{example}

\begin{rem}\rm
    Compared to the methods (2)--(5) discussed in the next subsections method (1) has the weakest assumptions. The objective function $M$ needs to be strictly increasing on $(\vartheta_0,\vartheta_0+\Delta)$. This holds if $m$ and the assumed density $f_X$ are positive on this interval. It is even not necessary to assume an absolute continuous distribution $\P^X$. No smoothness assumptions on $m$ are needed. 
\end{rem}


In Figures \ref{fig1} and \ref{fig2} we demonstrate the advantage of the new method compared to the non-consistent argmin procedure.
In Figure \ref{fig1} the graphic at the top left shows the true regression function $m(x)=(x-\vartheta_0)_+$ and the scatter plot of one  data set $(X_i,Y_i)$, $i=1,\dots,n$, for $n=200$ and true change point $\vartheta_0=0.4$. The covariates are uniformly distributed and the errors normally distributed with standard deviation $\sigma=0.2$. The graphic at the top right shows the true function $M(\vartheta)=(\vartheta-\vartheta_0)^2I\{\vartheta>\vartheta_0\}$. Based on the sample the graphic on the bottom left shows the function $M_n$. The argmin procedure (largest value minimizing $M_n$) does not give a suitable estimator for  $\vartheta_0=0.4$. The graphic on the bottom right shows the increasing rearrangement of $M_n$ from (\ref{M_n-1}). The horizontal line is the value $t_n=\sigma n^{-0.49}$, and the estimator $\hat\vartheta_n$ defined in (\ref{theta-hat})  is the value, where the line intersects with the increasing rearrangement of $M_n$. This seems more suitable than applying the argmin procedure. 

 \begin{figure}[h]
     \centering
     \includegraphics[width=10cm]{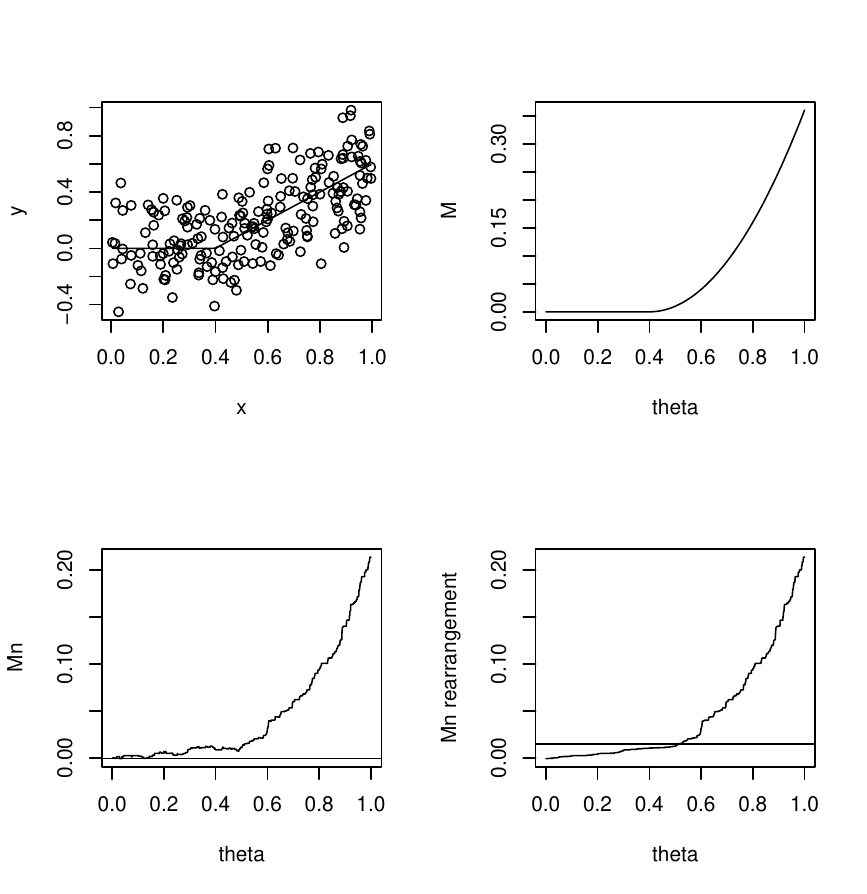}
     \caption{
     \it Top left: scatter plot of observations and true regression function; top right: population objective function $M$; bottom left: empirical objective function $M_n$; bottom right: increasing rearrangement of $M_n$ and horizontal line $t_n$.}
     \label{fig1}
 \end{figure}

While the graphics in Figure \ref{fig1} are based on one sample, now we present 1000 replications to demonstrate that the  argmin procedure does not lead to a consistent estimator. 
In Figure \ref{fig2} we use the same model with $\vartheta_0=0.4$ and choice of $t_n$. The graphic at the top left shows again the true regression function and the scatter plot of one  data set $(X_i,Y_i)$, $i=1,\dots,n$, for $n=200$. 
 The graphic at the bottom left shows  the estimated density of the estimators $\hat\vartheta_n$ 
 based on the 1000 replications. For the same data the graphic at the bottom right shows the estimated density of estimators $\bar\vartheta_n=\sup\{\vartheta\in [0,1]: M_n(\vartheta)=\inf_{\eta\in[0,1]}M_n(\eta)\}$ giving the largest minimizing value of $M_n$.  The graphic demonstrates that this procedure is not consistent as the density has two modes and more often estimates a too small value. The graphic on top right shows the boxplots of the 1000 estimators (left boxplot for $\hat\vartheta_n$, right boxplot for $\bar\vartheta_n$). Although we have proved consistency of $\hat\vartheta_n$ a bias approximation would be advantageous as the estimators are too large which is often the case in change point estimation. We discuss bootstrap bias approximation in Section \ref{sec-bootstrap}.

 \begin{figure}[h]
     \centering
     \includegraphics[width=10cm]{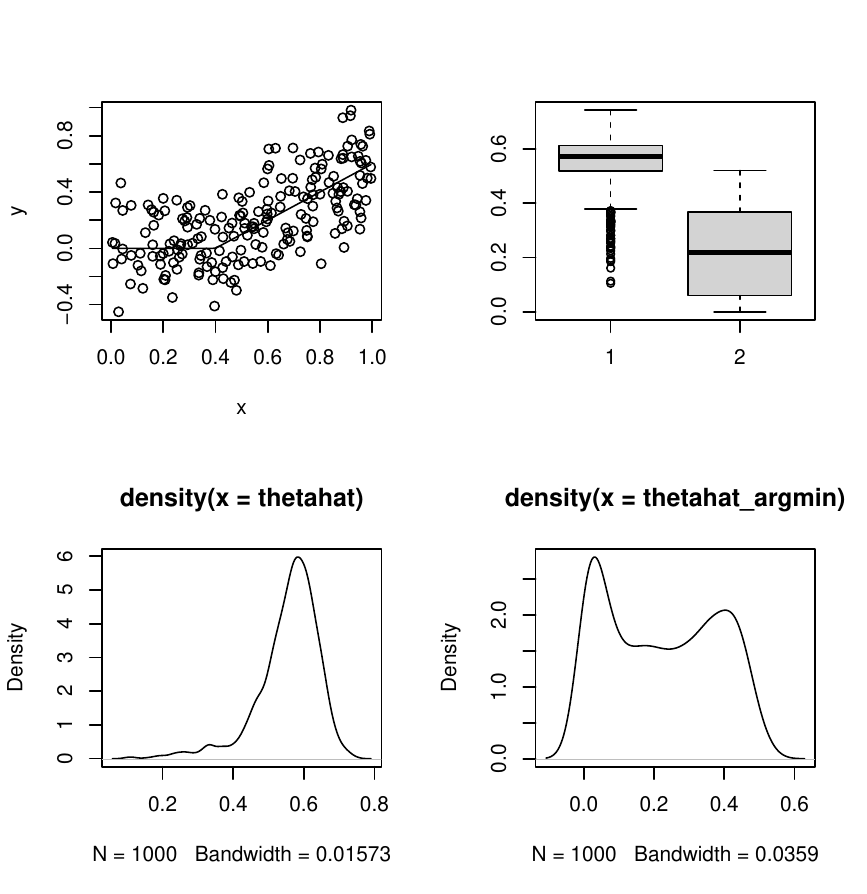}
       \caption{
       \it Top left: scatter plot of observations and true regression function; top right: boxplots (left $\hat\vartheta_n$, right argmin $\bar\vartheta_n$);  bottom left: estimated density of $\hat\vartheta_n$; bottom right: estimated density of the argmin estimator $\bar\vartheta_n$.}
     \label{fig2}
 \end{figure}


\subsubsection{Method (2): non-zero regression function}\label{non-zero}\label{method2}

Assume that the $\P^X$-measure of $\{x\in(\vartheta_0,\vartheta_0+\Delta): m(x)=0\}$ is zero. Let $\hat m_n$ be a uniformly consistent nonparametric estimator for the regression function $m$. 
Let $$M(\vartheta)=\int_{\vartheta_0}^\vartheta m^2(x)\,dF_X(x)\, I\{\vartheta>\vartheta_0\},$$ which is continuous on $[0,1]$ and zero on $[0,\vartheta_0]$. We estimate it by 
$$M_n(\vartheta)=\frac{1}{n}\sum_{i=1}^n (\hat m_n(X_i))^2I\{X_i\leq\vartheta\}. $$
To show consistency of $\hat\vartheta_n$ assume
 \begin{equation}\label{assumption2}
   P((\hat m_n-m)\in\mathcal{H})\longrightarrow 1 \mbox{ for } n\to\infty \mbox{ with a uniformly bounded Donsker class }\mathcal{H},
\end{equation}
which is often needed for empirical process theory based on residuals. For instance \cite{AkritasVanKeilegom} showed in their proof of Lemma 1 that
$P((\hat m_n-m)\in C_1^{1+\delta}([0,1]))\to 1$ for a Nadaraya-Watson estimator $\hat m$, where $C_1^{1+\delta}([0,1])$ with $\delta\in (0,1]$ is the class of all differentiable functions $d$ defined on $[0,1]$ such that $\max(\sup_x|d(x)|,\sup_x|d'(x)|)+\sup_{x,x'}\frac{|d'(x)-d'(x')|}{|x-x'|^\delta}$ $\leq 1$, which is a uniformly bounded Donsker class by Corollary 2.7.2 in \cite{vanderVaartWellner}. The result of \cite{AkritasVanKeilegom} is based on the smoothness assumption that $m$ is twice continuously differentiable and that $F_X$ is twice continuously differentiable and $\inf_{x}f_X(x)>0$. Further assume
\begin{equation}\label{assumption1}
   \sup_{\vartheta\in[0,1]}\left| \int_0^\vartheta (\hat m_n^2(x)-m^2(x))\, dF_X(x)\right|=O_{\P}(n^{-1/2}).
\end{equation}
This is also typically the case because one can rewrite the term as
\begin{eqnarray*}
\int_0^\vartheta (\hat m_n(x)-m(x))^2\, dF_X(x) +
    2\int_0^\vartheta (\hat m_n(x)-m(x))m(x)\, dF_X(x).
\end{eqnarray*}
The first term can be upper bounded by $E[(\hat m_n(X)-m(X))^2]$, which typically has the desired rate, again under smoothness assumptions.  The second term for kernel estimators under suitable assumptions has the desired rate because the integral smooths the estimator. 

The next lemma shows validity of assumption (\ref{tn}) and thus Theorem \ref{theo1} gives consistency of $\hat\vartheta_n$ with choosing $t_n=n^{-1/2}/c_n$ for every positive sequence $c_n$ converging to zero. 

\begin{lemma}\label{lem-non-zero-regression}
Under assumptions (\ref{assumption1}) and (\ref{assumption2}) it holds that
$$\sup_{\vartheta\in [0,1]}|M_n(\vartheta)-M(\vartheta)|=O_{\P}(n^{-1/2}).$$
\end{lemma}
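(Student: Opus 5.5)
We split $M_n-M$ into an empirical-process term and a smoothing term. Write $P_n$ for the empirical measure of $X_1,\dots,X_n$ and note that, since $m\equiv 0$ on $[0,\vartheta_0]$, we have $M(\vartheta)=\int_0^\vartheta m^2(x)\,dF_X(x)$ for every $\vartheta\in[0,1]$. Then
\begin{eqnarray*}
M_n(\vartheta)-M(\vartheta)&=&\frac1n\sum_{i=1}^n\Big(\hat m_n^2(X_i)I\{X_i\leq\vartheta\}-E\big[\hat m_n^2(X)I\{X\leq\vartheta\}\big]\Big)\\
&&{}+\int_0^\vartheta(\hat m_n^2(x)-m^2(x))\,dF_X(x),
\end{eqnarray*}
where the expectation is taken with respect to an independent copy $X$, with $\hat m_n$ held fixed. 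By assumption (\ref{assumption1}), the second term is $O_{\P}(n^{-1/2})$ uniformly in $\vartheta$. It therefore remains to control the first term, $\sup_\vartheta|(P_n-P^X)(\hat m_n^2 I\{\cdot\leq\vartheta\})|$.

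For this, substitute $\hat m_n=m+h$ with $h=\hat m_n-m$. On the event $\{h\in\mathcal H\}$, whose probability tends to one by (\ref{assumption2}), the first term is bounded by
$$\sup_{\vartheta\in[0,1],\,h\in\mathcal H}\Big|(P_n-P^X)\big((m+h)^2I\{\cdot\leq\vartheta\}\big)\Big|.$$
It thus suffices to show that the class $\mathcal F=\{x\mapsto (m(x)+h(x))^2I\{x\leq\vartheta\}:\ h\in\mathcal H,\ \vartheta\in[0,1]\}$ is $\P^X$-Donsker. Once this holds, $\sqrt n\,\sup_{f\in\mathcal F}|(P_n-P^X)f|=O_{\P}(1)$, and the lemma follows, since an event of probability tending to one does not affect $O_{\P}$ statements.

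\textbf{Donsker property of $\mathcal F$.} This is the main step. Since $m$ is continuous on $[0,1]$, it is bounded, so $\{m\}$ is a uniformly bounded Donsker class. The class $\mathcal H$ is uniformly bounded and Donsker by assumption, so $m+\mathcal H$ is uniformly bounded Donsker, as sums of Donsker classes are Donsker. The indicator class $\{I\{\cdot\leq\vartheta\}\}$ is VC, hence uniformly bounded Donsker, as already used for Method (1). We then apply the preservation theorem for Lipschitz transformations (e.g.\ Theorem 2.10.6 or Example 2.10.8 in \cite{vanderVaartWellner}). For uniformly bounded Donsker classes $\mathcal F_1,\mathcal F_2,\mathcal F_3$, the map $(u,v,w)\mapsto u\,v\,w$ is Lipschitz on bounded sets, so the product class $\{f_1f_2f_3\}$ is Donsker. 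Taking $f_1=f_2=m+h$ and $f_3$ an indicator, the class $\mathcal F$ is a subclass of such a product class and is therefore Donsker.

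The only point needing care is that the estimator $\hat m_n$ is random and depends on the sample. The argument avoids this difficulty by bounding the first term through the supremum over the fixed, deterministic class $\mathcal H$, which is valid on the event $\{h\in\mathcal H\}$, rather than through any conditioning on $\hat m_n$.
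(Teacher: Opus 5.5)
Your proof is correct and follows essentially the same route as the paper. Both arguments isolate the drift term $\int_0^\vartheta(\hat m_n^2-m^2)\,dF_X$, which is controlled by (\ref{assumption1}). Both then bound the centered empirical-process term on the event $\{\hat m_n-m\in\mathcal H\}$ by a supremum over a deterministic Donsker class, obtained from the sum and product preservation results in van der Vaart and Wellner. The only cosmetic difference is the grouping: the paper splits $\hat m_n^2=m^2+h(h+2m)$ and treats the classes $\{m^2 I\{\cdot\le\vartheta\}\}$ and $\{h(h+2m)I\{\cdot\le\vartheta\}\}$ separately, whereas you handle $(m+h)^2I\{\cdot\le\vartheta\}$ as a single class.
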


The proof is given in the appendix.

\begin{example}\label{example-non-zero}\rm
    Assume $X\sim \mathcal{U}[0,1]$.

(1) Let $m(x)=(x-\vartheta_0)_+^q$ for $q>0$ , then we obtain $M(\vartheta)= (\vartheta-\vartheta_0)_+^{2q+1}/(2q+1)$ and $M^{-1}(t)=\vartheta_0+((2q+1)t)^{1/(2q+1)}$ and the rate is $\delta_n\sim t_n^{1/(2q+1)}$.

(2) Let $m(x)=\sqrt{2(x-\vartheta_0)}e^{\frac{1}{2}(x-\vartheta_0)^2}I\{x>\vartheta_0\}$, then $M(\vartheta)=(e^{(\vartheta-\vartheta_0)^2}-1)I\{\vartheta>\vartheta_0\}$ and $M^{-1}(t)=\vartheta_0+(\log(t+1))^{1/2}$ and the rate is $\delta_n=(\log(t_n+1))^{1/2}$.

\end{example}

\subsubsection{Method (3): increasing regression function}\label{increasing regression}\label{method3}

Assume that $m$ is strictly increasing on $(\vartheta_0,\vartheta_0 +\Delta)$ and non-decreasing on $[\vartheta_0 +\Delta,1]$. Then one can use $$M(\vartheta)=m(\vartheta) \mbox{ and }M_n(\vartheta)=\hat m_n(\vartheta)$$ for some uniformly consistent nonparametric regression estimator $\hat m_n$. For (\ref{tn}) assume that
$$\sup_{\vartheta\in [0,1]}|\hat m_n(\vartheta)-m(\vartheta)|=O_{\P}(\beta_n).$$
Then one can choose $t_n=\beta_n/c_n$ for any positive sequence $c_n\to 0$ and Theorem \ref{theo1} holds. 

The optimal rate  is $\beta_n=((\log n)/n)^{\alpha/(2\alpha+1)}$ for a nonparametric $(k,\delta)$-H\"older regression function $m$ with $\alpha=k+\delta$, $k\in\N_0$, $\delta\in (0,1]$, see \cite{Stone}. 
For instance this holds for local polynomial regression estimators under suitable assumptions; in particular for twice continuously differentiable regression functions one obtains $\beta_n=b_n^2+( (\log n)/(nb_n))^{1/2}$ for a bandwidth sequence $b_n\to 0$.  
Uniform rates can be found in \cite{AkritasVanKeilegom} for a Nadaraya-Watson type estimator under the assumption that $m$ and $F_X$ are twice continuously differentiable and $\inf_{x}f_X(x)>0$, or in \cite{Antoch-etal} for local linear estimator.  \cite{Hansen} shows the uniform rate for local constant and local linear estimators in the more general case of strong mixing observations under the same smoothness assumptions. Because in this subsection we assume monotonicity one can also use increasing regression estimators, see \cite{Groeneboom-Jongbloed}.

\begin{example}\rm 
(1) Let $m(x)=(x-\vartheta_0)_+^q$ for $c>0$ , then we obtain $m^{-1}(t)=\vartheta_0+t^{1/q}$ and the rate is $\delta_n=t_n^{1/q}$.
In the case $q\in (0,1)$ this can lead to a faster rate of convergence than $\beta_n$, whereas for $q>1$ the rate will be slower. 

(2) Let $m(x)=(\exp(x-\vartheta_0)-1)I\{x\in(\vartheta_0,1]\}$, then we have 
$m^{-1}(t)=\vartheta_0+\log (t+1)$  and obtain the slow rate $\delta_n=\log(t_n+1)$.
\end{example}

\subsubsection{Method (4): positive right $q$-th derivative}\label{sec-deriv}\label{method4}

In this subsection we assume that 
$m(x)=(x-\vartheta_0)^q_+\, g(x)$ for some Lipschitz function $g:[0,1]\to\mathbb{R}_0^+$ with $q>0$ known and $g(\vartheta_0)=m^{(q)}(\vartheta_0+)> 0$. 
Then we define the estimator
 \begin{equation}\label{hat-g}
     \hat g_{\vartheta}(x)=\arg\min_{c\in\R}\sum_{i=1}^n \left(Y_i-c(X_i-\vartheta)_+^q\right)^2\kappa\Big(\frac{X_i-x}{d_n}\Big)=
\frac{\sum_{i=1}^n Y_i\kappa(\frac{X_i-x}{d_n})(X_i-\vartheta)_+^q}{\sum_{i=1}^n \kappa(\frac{X_i-x}{d_n})(X_i-\vartheta)_+^{2q}},
 \end{equation}
where $\kappa$ is a one-sided kernel with bounded support $[0,C_\kappa]$, that is Lipschitz continuous, and $d_n$ is a sequence of bandwidths with $d_n\to 0$ and $\log(n)/(nd_n)\to 0$. Then for $x>\vartheta$, $\hat g_{\vartheta}(x)$ estimates $ g(x)\frac{(x-\vartheta_0)_+^q}{(x-\vartheta)_+^q}$ if $\vartheta\geq\vartheta_0$ or $\vartheta<\vartheta_0\leq x$, and $0$ if $\vartheta<x<\vartheta_0$. 
Thus 
\begin{equation}
    \label{hat-m-theta}
    \hat m_\vartheta(x)= \hat g_{\vartheta}(x)(x-\vartheta)^q_+
\end{equation} estimates
$m(x)I\{x>\vartheta\}$, and
$$ M_n(\vartheta)=\frac 1n\sum_{i=1}^n \left(Y_i-\hat m_\vartheta(X_i)\right)$$ estimates
\begin{eqnarray*}
M(\vartheta)&=&\int(m(x)-m(x)I\{x>\vartheta\})\,dF_X(x)=\int_{\vartheta_0}^\vartheta m(x)\,dF_X(x)I\{\vartheta>\vartheta_0\},
\end{eqnarray*}
the same objective function as in Subsection \ref{method1}. We need the following assumptions. 
\begin{align}
\label{ass1} &\!\text{
    Let the distribution  of the covariates $X$ have a continuous density $f_X$}\\ &\text{that is bounded from below on $[0,1]$. }\nonumber\\\nonumber \\
\label{ass2}&\!\text{
   Let the errors $\varepsilon$ fulfill  $\forall l\geq 2\ \exists C_l>0:E[|\varepsilon|^l]\leq C_la^{2(l-1)}$}\\&\text{for some constant $a$. }\nonumber
\end{align}

\begin{lemma}\label{lem-deriv-pos} Let $m$ fulfill the assumptions described at the beginning of Subsection \ref{method4}, and assumptions \eqref{ass1} and \eqref{ass2} hold. 
Then, with $d_n\sim n^{-(2+2(q\wedge 1))/(2+6(q\wedge 1))}$ it holds
\begin{align*}
\sup_{\vartheta\in[0,1]}|M_n(\vartheta)-M(\vartheta)|=&\ O_{\P}\Big(\sqrt{\frac{\log(n)}{nd_n}}\Big)+O(d_n^{(q\wedge 1)/(1+(q\wedge 1))}\log(n)^{(q\wedge 1)/(1+(q\wedge 1))})+O_{\P}(n^{-\frac 12})\\
=&\ O_{\P}(n^{-2(q\wedge 1)/(2+6(q\wedge 1))}\log(n)^{\frac 12}).
\end{align*}
\end{lemma}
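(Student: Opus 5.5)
Write $Q_{\vartheta,x}(c)$ for the local least-squares criterion in \eqref{hat-g} and split
\[
M_n(\vartheta)-M(\vartheta)=\Big(\frac1n\sum_{i=1}^n Y_i-\int m\,dF_X\Big)-\Big(\frac1n\sum_{i=1}^n \hat m_\vartheta(X_i)-\int m(x)I\{x>\vartheta\}\,dF_X(x)\Big).
\]
The first bracket is $O_{\P}(n^{-1/2})$ by the central limit theorem. Decompose the second bracket further into
\[
\frac1n\sum_{i=1}^n \big(\hat m_\vartheta(X_i)-m(X_i)I\{X_i>\vartheta\}\big)
\quad\mbox{and}\quad
\frac1n\sum_{i=1}^n m(X_i)I\{X_i>\vartheta\}-\int m(x)I\{x>\vartheta\}\,dF_X(x).
\]
The latter is $O_{\P}(n^{-1/2})$ uniformly in $\vartheta$, since $\{x\mapsto m(x)I\{x>\vartheta\}\}$ is a bounded VC-type class, as argued in Subsection \ref{method1}. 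The main work is therefore a uniform bound over $(\vartheta,x)\in[0,1]^2$ on $\hat m_\vartheta(x)-m(x)I\{x>\vartheta\}$, except on a small exceptional set of $x$ that contributes little after averaging.

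Write $\hat m_\vartheta(x)=(x-\vartheta)_+^q\,A_n(x,\vartheta)/B_n(x,\vartheta)$, where
\[
A_n(x,\vartheta)=\frac1{nd_n}\sum_{i=1}^n Y_i\,\kappa\Big(\frac{X_i-x}{d_n}\Big)(X_i-\vartheta)_+^q,\qquad
B_n(x,\vartheta)=\frac1{nd_n}\sum_{i=1}^n \kappa\Big(\frac{X_i-x}{d_n}\Big)(X_i-\vartheta)_+^{2q}.
\]
The kernel is one-sided, so only $X_i\in[x,x+C_\kappa d_n]$ enter. I would split $A_n$ into a noise part $\sum\eps_i\kappa(\cdot)(X_i-\vartheta)_+^q$ and a bias part $\sum m(X_i)\kappa(\cdot)(X_i-\vartheta)_+^q$.

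\textbf{Noise part.} Bound it uniformly over $(x,\vartheta)$ by $O_{\P}(\sqrt{\log n/(nd_n)})$. I would do this by a Bernstein inequality, which is available because of the moment condition \eqref{ass2}, applied on a grid of mesh $n^{-2}$, and then pass from the grid to all $(x,\vartheta)$ using Lipschitz continuity of $\kappa$ and of $t\mapsto t_+^q$ (Hölder of order $q\wedge1$). The same argument together with \eqref{ass1} shows that $B_n$ is close to its expectation.

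\textbf{Bias part.} For $x>\vartheta$ and $X_i$ near $x$, use $m(X_i)=g(X_i)(X_i-\vartheta_0)_+^q$ and the Lipschitz continuity of $g$, which gives
\[
\frac{m(X_i)}{(X_i-\vartheta)_+^q}=g(x)\frac{(x-\vartheta_0)_+^q}{(x-\vartheta)_+^q}+O(d_n)\mbox{-type errors}.
\]
The factor $\frac{(X_i-\vartheta_0)_+^q}{(X_i-\vartheta)_+^q}$ is, however, not smooth when $x$ is close to $\vartheta$ or to $\vartheta_0$.

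\textbf{Main obstacle.} The hardest step is controlling the region where $x-\vartheta$ or $x-\vartheta_0$ is of order $d_n$ or smaller. There the denominator $B_n$ is tiny and the ratio is unstable, so no uniform bound is available. I would introduce a cutoff $\eta_n$, treat $x\ge\vartheta+\eta_n$ and $x\ge\vartheta_0+\eta_n$ with the uniform bounds above, and on the complementary region use only that $\hat m_\vartheta(X_i)$ and $m(X_i)$ are bounded in probability there.

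1. For the bound on $\hat m_\vartheta$: $\hat g_\vartheta(x)$ is a weighted average of $Y_i/(X_i-\vartheta)^q$, so $\hat m_\vartheta(x)$ is a weighted average of $Y_i\,(x-\vartheta)^q/(X_i-\vartheta)^q$ with ratios $\le1$ because $X_i\ge x$. This still requires a bound on $\max_i|\eps_i|$, which is of order $\log n$ by \eqref{ass2}.
2. The fraction of observations in the bad region is $O_{\P}(\eta_n)$.
3. On the good region, the Hölder-type error of the ratio is of order $(d_n/\eta_n)^{q\wedge1}$.

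Balancing $(d_n/\eta_n)^{q\wedge1}$ against $\eta_n\log n$ gives the middle term $(d_n\log n)^{(q\wedge1)/(1+(q\wedge1))}$. Finally, plugging $d_n\sim n^{-(2+2(q\wedge1))/(2+6(q\wedge1))}$ balances this term against $\sqrt{\log n/(nd_n)}$, and a direct exponent computation yields the stated overall rate.
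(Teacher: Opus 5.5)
Your architecture is the paper's: the $O_{\P}(n^{-1/2})$ pieces, a uniform bound for $\hat m_\vartheta(x)-m(x)$ on $\{x\ge\vartheta+\eta_n\}$ (the paper's auxiliary Lemma~\ref{lem-deriv-help} with $r_n=\eta_n$), a strip of width $\eta_n$ handled through $|\hat m_\vartheta|\le\max_i|Y_i|=O_{\P}(\log n)$, and the balancing of $(d_n/\eta_n)^{q\wedge1}$ against $\eta_n\log n$. Your extra cutoff near $\vartheta_0$ is superfluous but harmless, since $|(X_i-\vartheta_0)_+^q-(x-\vartheta_0)_+^q|\lesssim d_n^{q\wedge1}$ for $0\le X_i-x\le C_\kappa d_n$.

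The step that fails as written is the noise term. You bound the unnormalized noise part of $A_n$, namely $\frac1{nd_n}\sum_i\varepsilon_i\kappa(\frac{X_i-x}{d_n})(X_i-\vartheta)_+^q$, by $O_{\P}(\sqrt{\log n/(nd_n)})$, and you use that $B_n$ is close to its mean. But in $\hat m_\vartheta(x)$ this quantity is multiplied by $(x-\vartheta)^q/B_n$, with $B_n\asymp(x-\vartheta)^{2q}$. So you only obtain $\sqrt{\log n/(nd_n)}\,(x-\vartheta)^{-q}$, i.e.\ $\eta_n^{-q}\sqrt{\log n/(nd_n)}$ uniformly on the good region. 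Up to logarithms, $\eta_n$ is of order $n^{-2(q\wedge1)/(2+6(q\wedge1))}$. Hence this bound is slower than the claimed rate for every $q$, and for $q\ge1$ it does not even tend to zero (for $q=1$ it is of order $\log n$). Similarly, an additive bound $|B_n-E B_n|=O_{\P}(\sqrt{\log n/(nd_n)})$ does not give $B_n/E B_n\to1$ once $\eta_n^{2q}$ falls below $\sqrt{\log n/(nd_n)}$, which happens for $q\ge 1/2$.

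The repair is the normalization used in the paper's proof of Lemma~\ref{lem-deriv-help}. Because $\kappa$ is one-sided, only $X_i\ge x$ enter, so
$B_n\ge(x-\vartheta)^{2q}\frac1{nd_n}\sum_i\kappa(\frac{X_i-x}{d_n})$.
This is a relative lower bound, and it only needs the plain kernel sum to be bounded away from zero. Divide numerator and denominator by $(x-\vartheta)^{2q}$. The noise contribution to $\hat m_\vartheta(x)$ is then at most a constant times $\big|\frac1{nd_n}\sum_i\varepsilon_i\kappa(\frac{X_i-x}{d_n})(\frac{X_i-\vartheta}{x-\vartheta})^q\big|$. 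On $\{x\ge\vartheta+\eta_n\}$ the factor satisfies $(\frac{X_i-\vartheta}{x-\vartheta})^q\le(1+C_\kappa d_n/\eta_n)^q=O(1)$. Apply your Bernstein-plus-grid argument to this normalized sum. Its Lipschitz constants in $(x,\vartheta)$ are polynomial in $n$, so a polynomially fine grid still costs only a $\log n$ factor in the union bound. This gives the required $O_{\P}(\sqrt{\log n/(nd_n)})$, and the rest of your plan then goes through.
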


The proof is given in the appendix.
\\
For the estimator $ \hat\vartheta_n$ one can choose $t_n=(\log n)^{1/2}/(n^{2(q\wedge 1)/(2+6(q\wedge 1))}c_n)$
for any positive $c_n\to 0$, and the result from Theorem \ref{theo1} holds.

\subsubsection{Method (5): non-zero right $q$-th derivative}\label{method5}

In this subsection we assume that 
$m(x)=(x-\vartheta_0)^q_+\, g(x)$ for some Lipschitz function $g:[0,1]\to\mathbb{R}_0^+$ with $q>0$ known and $g(\vartheta_0)=m^{(q)}(\vartheta_0+)\neq 0$. 
Then one can define
$$M_n(\vartheta)=\frac 1n\sum_{i=1}^n \left(Y_i-\hat m_{\vartheta}(X_i)\right)^2-\hat\sigma_n^2$$
with $\hat m_\vartheta$ from (\ref{hat-m-theta}). Further let $\hat\sigma_n^2$ be a consistent estimator for the error variance $\sigma^2=E[\varepsilon_1^2]$. We assume that $|\hat\sigma_n^2-\sigma^2|=O_{\P}(n^{-\frac 12})$, which is typically fulfilled, e.\,g.\ with $\hat\sigma_n^2:=\frac 1n\sum_{i=1}^n (Y_i-\hat m_n(X_i))^2$ for some consistent nonparametric regression estimator $\hat m_n$.
Then $M_n(\vartheta)$ estimates 
$$M(\vartheta)=\int_{\vartheta_0}^{\vartheta}m^2(x)dF_X(x)I\{\vartheta>\vartheta_0\},$$ 
which is the same objective function as in Subsection \ref{non-zero}.

\begin{lemma}\label{lem-deriv}
 Let $m$ fulfill the assumptions described at the beginning of Subsection \ref{method5}, and assumptions \eqref{ass1} and \eqref{ass2} from Subsection \ref{method4} hold.  Then, under the assumption $|\hat\sigma_n^2-\sigma^2|=O_{\P}(n^{-\frac 12})$ and with $d_n\sim n^{-(1+2(q\wedge 1))/(1+4(q\wedge 1))}$ it holds
\begin{align*}
&\sup_{\vartheta\in[0,1]}|M_n(\vartheta)-M(\vartheta)|\\
=&\ O_{\P}\Big(\frac{\log(n)}{nd_n}\Big)+O(d_n^{2(q\wedge 1)/(1+2(q\wedge 1))}\log(n)^{2(q\wedge 1)/(1+2(q\wedge 1))})+O_{\P}(n^{-\frac 12})+O(|\hat\sigma_n^2-\sigma^2|)\\
=&\ O_{\P}(n^{-2(q\wedge 1)/(1+4(q\wedge 1))}\log(n)).
\end{align*}
\end{lemma}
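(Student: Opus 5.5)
The plan is to decompose $M_n(\vartheta)-M(\vartheta)$ using $Y_i=m(X_i)+\eps_i$. Write $r_\vartheta(x)=m(x)-\hat m_\vartheta(x)$ and $m_\vartheta(x)=m(x)I\{x\le\vartheta\}$, which is the population target of $r_\vartheta$. Then
\begin{align*}
M_n(\vartheta)=&\ \frac1n\sum_{i=1}^n\eps_i^2-\hat\sigma_n^2+\frac2n\sum_{i=1}^n\eps_i\, r_\vartheta(X_i)+\frac1n\sum_{i=1}^n r_\vartheta^2(X_i).
\end{align*}
The first difference $\frac1n\sum\eps_i^2-\hat\sigma_n^2$ is $O_{\P}(n^{-1/2})+O(|\hat\sigma_n^2-\sigma^2|)$ by the CLT and the assumption on $\hat\sigma_n^2$. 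For the last term, $\frac1n\sum r_\vartheta^2(X_i)$, I would further split
$$r_\vartheta^2=m_\vartheta^2+2m_\vartheta(r_\vartheta-m_\vartheta)+(r_\vartheta-m_\vartheta)^2.$$
Here $\frac1n\sum m^2(X_i)I\{X_i\le\vartheta\}$ estimates $M(\vartheta)$ uniformly at rate $O_{\P}(n^{-1/2})$ by the VC/Donsker argument used in Method (1), since $m$ vanishes below $\vartheta_0$. So everything reduces to uniform control of $D_\vartheta:=\hat m_\vartheta-m I\{\cdot>\vartheta\}=m_\vartheta-r_\vartheta$.

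The key ingredient, which I would take over from the proof of Lemma \ref{lem-deriv-pos} and adapt, is a uniform-in-$(\vartheta,x)$ bound on $D_\vartheta(x)$. This bound separates a stochastic part and a bias part. The stochastic part is $\sqrt{\log n/(nd_n)}$, and comes from Bernstein's inequality (enabled by the moment condition \eqref{ass2}) together with a chaining or grid argument over $(\vartheta,x)$, using Lipschitz continuity of $\kappa$ and of $(\cdot-\vartheta)_+^q$. The bias part comes from Lipschitz $g$, from the factor $(x-\vartheta_0)_+^q/(x-\vartheta)_+^q$, and from the local window of width $d_n$. The bias is not uniformly small for $x$ close to $\vartheta$ or $\vartheta_0$, where the denominator $\sum\kappa(\cdot)(X_i-\vartheta)_+^{2q}$ degenerates. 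I would therefore introduce a cut-off $\gamma_n$: for $x\in(\vartheta,\vartheta+\gamma_n]$ (and near $\vartheta_0$) only use the crude boundedness $|\hat m_\vartheta(x)|\lesssim|Y|$-scale, while for $|x-\vartheta|>\gamma_n$ use the bias bound of order $d_n^{q\wedge1}/\gamma_n^{\cdot}$. The exceptional region then has $\P^X$-mass $O(\gamma_n)$.

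With this bound, the quadratic term $\frac1n\sum D_\vartheta^2(X_i)$ is of order $\log n/(nd_n)$ plus squared bias plus the exceptional-region contribution. Because the terms are squared, this gives the improved exponents $\log(n)/(nd_n)$ and $(d_n\log n)^{2(q\wedge1)/(1+2(q\wedge1))}$ compared with Lemma \ref{lem-deriv-pos}, after optimizing $\gamma_n$. The cross terms $\frac1n\sum m_\vartheta D_\vartheta$ and $\frac1n\sum\eps_i r_\vartheta(X_i)$ need more care, because a naive Cauchy--Schwarz bound would only give the square-root rate. For $\frac1n\sum\eps_i m_\vartheta(X_i)$ the VC argument again gives $n^{-1/2}$. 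For $\frac1n\sum \eps_i D_\vartheta(X_i)$ and $\frac1n\sum m_\vartheta D_\vartheta$, I would expand $\hat g_\vartheta$ as a ratio of kernel sums. The stochastic part then becomes a degenerate U-statistic-type double sum of order $1/(nd_n)$ (up to logs) plus a smoothed linear term of order $n^{-1/2}$. The integration over $x$ removes one layer of localization, as in the argument behind \eqref{assumption1}. The bias part is handled by the same $\gamma_n$ split.

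The main obstacle is this cross-term step: obtaining the $n^{-1/2}$ and $1/(nd_n)$ rates uniformly in $\vartheta$ rather than $\sqrt{\log n/(nd_n)}$. I would first try a Hoeffding decomposition of the double sums, with uniformity in $\vartheta$ obtained via a grid of mesh $n^{-2}$ and the Lipschitz property in $\vartheta$. Finally, the balance $\log n/(nd_n)\asymp(d_n\log n)^{2(q\wedge1)/(1+2(q\wedge1))}$ gives $d_n\sim n^{-(1+2(q\wedge1))/(1+4(q\wedge1))}$ and the stated overall rate.
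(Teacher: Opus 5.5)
Your outline is correct, and for most terms it coincides with the paper's proof. Your $\gamma_n$ is the paper's $r_n$. The paper also:
\begin{itemize}
\item separates $X_j\le\vartheta$, the strip $\vartheta<X_j\le\vartheta+r_n$, and $X_j>\vartheta+r_n$;
\item bounds the strip crudely by $(2\sup|m|+\max_i|\eps_i|)^2$ times its empirical mass $r_n+O_{\P}(n^{-1/2})$;
\item squares the uniform bound of Lemma \ref{lem-deriv-help} off the strip;
\item uses a Donsker argument for $\frac1n\sum_j m^2(X_j)I\{X_j\le\vartheta\}$, and the assumption on $\hat\sigma_n^2$ for $\frac1n\sum_j\eps_j^2-\hat\sigma_n^2$;
\item balances $r_n$ and $d_n$ exactly as you do.
\end{itemize}

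The genuine difference is the mixed term $\frac2n\sum_j\eps_j(m-\hat m_\vartheta)(X_j)$. The paper's display bounds $|M_n-M|$ by twice the $\hat\sigma_n^2$-term, the Donsker term and the strip term, plus the squared rates from Lemma \ref{lem-deriv-help}. The mixed term never appears and is not controlled by any of these pieces. If you treat it only with the sup-norm bound of Lemma \ref{lem-deriv-help}, it is $O_{\P}(d_n^{q\wedge1}r_n^{-(q\wedge1)}+\sqrt{\log(n)/(nd_n)})$, which is the square root of the claimed rate. Your exact identity, followed by a U-statistic analysis of this term, is what actually yields the stated rate. The paper's route is shorter only because it skips this step.

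Some remarks for carrying your plan out.
\begin{itemize}
\item $\hat m_\vartheta$ and $mI\{\cdot>\vartheta\}$ both vanish on $\{x\le\vartheta\}$, so $m_\vartheta D_\vartheta\equiv0$. Hence $\frac1n\sum_j\eps_jD_\vartheta(X_j)$ is the only term needing new work.
\item No cut-off near $\vartheta_0$ is needed, since Lemma \ref{lem-deriv-help} is already uniform over $x\in[\vartheta+r_n,1]$.
\item On the strip, bound $\frac1n\sum_j\eps_jD_\vartheta(X_j)$ crudely by $O_{\P}(r_n\log n)$, as for the quadratic term. Lipschitz control in $\vartheta$ breaks down there as $x\downarrow\vartheta$, so your grid argument cannot cover it.
\item Off the strip, insert $Y_i=m(X_i)+\eps_i$ into $\hat m_\vartheta$. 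Conditionally on the covariates, the $m$-part is a centred linear form in the $\eps_j$ with bounded weights, of order $n^{-1/2}$ up to negligible factors.
\item In the $\eps$-part, replacing the random denominator by its deterministic counterpart costs $O_{\P}(\log(n)/(nd_n))$ by crude bounds. The diagonal $i=j$ then contributes $O_{\P}(1/(nd_n))$. The off-diagonal part is a completely degenerate U-statistic (because $E[\eps\mid X]=0$) of pointwise order $(n\sqrt{d_n})^{-1}$.
\end{itemize}

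A variance bound alone will not survive the union bound over a polynomial grid in $\vartheta$. Use moments of sufficiently high order (available under \eqref{ass2}) or an exponential inequality for degenerate U-statistics. This works because $(n\sqrt{d_n})^{-1}$ is polynomially smaller than $\log(n)/(nd_n)$.
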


The proof is given in the appendix. Consistency of $\hat\vartheta_n$ from Theorem \ref{theo1} holds with $t_n=(\log n)/(n^{2(q\wedge 1)/(1+4(q\wedge 1))}c_n)$ for positive $c_n\to 0$.

\subsubsection{Examples rates of convergence}\label{examplerates}

Consider the uniform covariate case $X\sim \mathcal{U}[0,1]$ and the regression function   $m(x)=(x-\vartheta_0)_+^q$ for $q>0$. We obtain the following rates.

\begin{itemize}
   
 \item  Method (1): $t_n\gg n^{-1/2}$, $\delta_n=t_n^{1/(q+1)}$

 \item  Method (2): $t_n\gg n^{-1/2}$, $\delta_n=t_n^{1/(2q+1)}$

 \item Method (3):  $m$ is in the $(0,q)$-H\"older-space for $q\in (0,1]$, and in the $(k,\delta)$-H\"older-space with $k=\underline{q}$ the greatest integer smaller than $q$, and $\delta=q-\underline{q}$, if $q>1$. Then  using the  optimal rate one has $t_n\gg ((\log n)/n)^{q/(2q+1)}$ and obtains $\delta_n=t_n^{1/q}$.

 \item Method (4): choose $d_n\sim n^{-\farc{2+2(q\wedge 1)}{2+6(q\wedge 1)}}$,  then $t_n\gg n^{-\frac{2(q\wedge 1)}{2+6(q\wedge 1)}}$, $\delta_n=t_n^{1/(q+1)}$

 \item Method (5): choose $d_n\sim n^{-\farc{1+2(q\wedge 1)}{1+4(q\wedge 1)}}$,  then $t_n\gg n^{-\frac{2(q\wedge 1)}{1+4(q\wedge 1)}}$, $\delta_n=t_n^{1/(2q+1)}$

\end{itemize}

To compare the rates of convergence in Table \ref{example-table} below we ignore $c_n$ and factors $\log n$. Note that although the rate of $M_n$ is slower in method (3), the rate of $\hat\vartheta_n$ is faster compared to the other methods. The derived rates of methods (4) and (5) are rather slow based on the rates of the empirical objective function in Lemmas \ref{lem-deriv-pos} and \ref{lem-deriv}.

\begin{table}[h]
   \begin{center}
   \begin{tabular}{l|c|c|c|c|c|}
method & (1) & (2)& (3)& (4)& (5)\\\hline\hline
$q=0.5$ &$n^{-1/3}$ & $n^{-1/4}$ & $n^{-1/2}$ & $n^{-2/15}$& $n^{-1/6}$\\
$q=1$& $n^{-1/4}$ & $n^{-1/6}$ & $n^{-1/3}$ & $n^{-1/8}$ & $n^{-2/15}$ \\
$q=2$&  $n^{-1/6}$ & $n^{-1/10}$ & $n^{-1/5}$& $n^{-1/12}$ & $n^{-2/25}$
\end{tabular}
\end{center}
\caption{\it Examples approximately rates of convergence of methods (1)--(5) for $m(x)=(x-\vartheta_0)_+^q$.\label{example-table}}
\end{table}

\subsection{Fixed design case}\label{fixed-design}

For application examples it might be useful also to consider the case of deterministic design, for which the theory has to be adjusted. Consider the model 
\begin{equation*}
Y_{i,n}=m(x_{i,n})+\eps_{i,n},\quad i=1,\dots,n,
\end{equation*}
with independent identically distributed random variables $\varepsilon_{i,n}$, $i=1,\dots,n$, $n\in\mathbb{N}$, with $E[\eps_{i,n}]=0$ and existing variance.  All considered methods (1)--(5) can be adjusted for this deterministic design case under suitable assumptions on the fixed design. 

Let us first consider method (1) from Subsection \ref{method1}, and assume the structure $F_X(x_{i,n})=\frac{i}{n}$ for some cdf $F_X$  with a positive density on $[0,1]$. This assumption is based on \cite{Sacks-Ylvisaker}. Then 
\begin{eqnarray*}
M_n(\vartheta)&=&\frac{1}{n}\sum_{i=1}^n Y_{i,n}I\{x_{i,n}\leq\vartheta\}\\
&=& \frac{1}{n}\sum_{i=1}^n m(F_X^{-1}(\frac{i}{n}))I\{\frac{i}{n}\leq F_X(\vartheta)\} +\frac{1}{n}\sum_{i=1}^n \eps_{i,n}I\{x_{i,n}\leq\vartheta\}.
\end{eqnarray*}
The first term is a Riemann-sum and converges with rate $n^{-1}$ to $$M(\vartheta)=\int_0^{F_X(\vartheta)} m(F_X^{-1}(x))\, dx=\int_0^\vartheta m(x)\,dF_X(x)=\int_{\vartheta_0}^\vartheta m(x)\,dF_X(x)I\{\vartheta>\vartheta_0\}.$$ For the second term let $\eps_1,\eps_2,\dots$ be iid with the same distribution as the triangular arrays $\eps_{i,n}$, $i=1,\dots,n$, $n\in\mathbb{N}$. Then the second term has the same distribution as 
$$\frac{1}{n}\sum_{i=1}^{\lfloor nF_X(\vartheta)\rfloor} \eps_{i}=O_{\P}(n^{-1/2})$$
uniformly in $\vartheta\in[0,1]$ by the partial sum Donsker result. 

For methods (2) and (3) from Subsections \ref{method2} and \ref{method3} one uses uniformly consistency results for the nonparametric regression estimator. For instance uniform rates of convergence of kernel and local polynomial estimators in the fixed design case are shown by \cite{Ioannides} and  \cite{Tsybakov}. Further for arithmetic means only depending on the covariates as in the proof of Lemma \ref{lem-non-zero-regression} one can apply uniform Riemann-sum approximations instead of empirical process theory. Similar results for  methods (4) and (5) can be derived in the fixed design case.

\section{Estimation of the regression function}\label{chap:regression}\label{sec-m}

In this section we assume that
$m(x)=(x-\vartheta_0)_+^q\, g(x)$ for some Lipschitz function $g:[0,1]\to\mathbb{R}_0^+$ with $g(\vartheta_0)\neq 0$ as in Subsections \ref{method4} and \ref{method5}.  Under the assumption of this regression function structure one would like to apply estimators which have the same structure. This is always of interest for shape constraints on regression functions as considered by \cite{Groeneboom-Jongbloed}, among others. A classical  nonparametric regression estimator will not have the desired shape. So based on $\hat g_\vartheta$ from (\ref{hat-g}) we define the estimator
$$\hat m_n(x)=(x-\hat\vartheta_n)_+^q\,\hat g_{\hat\vartheta_n}(x)$$
which is continuous on $[0,1]$, and zero on $[0,\hat\vartheta_n]$. 
The next theorem gives uniform consistency of the structured regression estimator using one of the consistent change point estimators $\hat\vartheta_n=\vartheta_0+O_{\P}(\delta_n)$. The proof is given in the appendix.

\begin{theo}\label{theoregr} Let $m$ fulfill the assumptions described at the beginning of Section \ref{sec-m}, and assumptions \eqref{ass1} and \eqref{ass2} from Subsection \ref{method4} hold.
Then, the regression estimator $\hat m_n(x)$ is uniformly consistent for $x\in [0,\hat\vartheta_n]\cup [\hat\vartheta_n+r_n,1]$ for all $r_n\downarrow0$ with $d_n=o(r_n)$. In detail,
\begin{align*}
\sup_{x\in[0,\hat\vartheta_n]\cup [\hat\vartheta_n+r_n,1]}|\hat m_n(x)-m(x)|=&\ O_\mathbb{P}(d_n^{q\wedge 1}r_n^{-(q\wedge 1)})
+O_{\P}(\delta_n^q)
+O_{\P}\Big(\sqrt{\frac{\log(n)}{nd_n}}\Big),
\end{align*}
where $\delta_n$ is the rate of $\hat\vartheta_n$, $|\hat\vartheta_n-\vartheta_0|=O_{\P}(\delta_n)$.
\end{theo}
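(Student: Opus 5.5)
On $[0,\hat\vartheta_n]$ both $\hat m_n$ and $m$ are small. By construction $\hat m_n(x)=0$ there. Also $m(x)=0$ for $x\le\vartheta_0$, while for $\vartheta_0<x\le\hat\vartheta_n$ we have $|m(x)|\le\|g\|_\infty(\hat\vartheta_n-\vartheta_0)^q=O_{\P}(\delta_n^q)$. So this part contributes exactly the term $O_{\P}(\delta_n^q)$.

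The main work is on $[\hat\vartheta_n+r_n,1]$. Write $\vartheta=\hat\vartheta_n$ and $w_i(x)=\kappa((X_i-x)/d_n)$. Since $\kappa$ has support $[0,C_\kappa]$, only $X_i\in[x,x+C_\kappa d_n]$ enter. For $x\ge\vartheta+r_n$ and $d_n=o(r_n)$, all these $X_i$ satisfy $X_i-\vartheta\ge r_n$, so $(X_i-\vartheta)_+^q=(X_i-\vartheta)^q$ and the denominator of (\ref{hat-g}) is nondegenerate. Decompose
\begin{equation*}
\hat m_n(x)-m(x)=(x-\vartheta)^q\frac{\sum_i \varepsilon_i w_i(x)(X_i-\vartheta)^q}{\sum_i w_i(x)(X_i-\vartheta)^{2q}}+(x-\vartheta)^q\frac{\sum_i (m(X_i)-c_x(X_i-\vartheta)^q) w_i(x)(X_i-\vartheta)^q}{\sum_i w_i(x)(X_i-\vartheta)^{2q}}
\end{equation*}
with $c_x=m(x)/(x-\vartheta)^q$. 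This is a stochastic term plus a bias term.

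For the \emph{stochastic term}, I would prove a uniform bound over $x\in[0,1]$ and over $\vartheta$ in a neighbourhood of $\vartheta_0$. Uniformity in $\vartheta$ is needed because $\hat\vartheta_n$ is random and data dependent. I would use a Bernstein-type exponential inequality, which is available through (\ref{ass2}). Combined with a grid over $(x,\vartheta)$ of polynomial size and the Lipschitz continuity of $\kappa$ and of $\vartheta\mapsto(X_i-\vartheta)^q$ on the relevant range, this gives order $\sqrt{\log n/(nd_n)}$ after normalizing by the denominator. The normalization uses that the denominator is of order $nd_n(x-\vartheta)^{2q}$ uniformly, which follows from (\ref{ass1}) and the same exponential-inequality argument. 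The prefactor $(x-\vartheta)^q$ cancels against the ratio of the numerator scale to the denominator scale, leaving $O_{\P}(\sqrt{\log(n)/(nd_n)})$.

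For the \emph{bias term}, the ratio is a weighted average of $m(X_i)(x-\vartheta)^q/(X_i-\vartheta)^q-m(x)$ over $X_i\in[x,x+C_\kappa d_n]$, and I bound this difference pointwise. Write $m(X_i)=g(X_i)(X_i-\vartheta_0)_+^q$. The increment from $g$ is $O(d_n)$ by the Lipschitz property. The remaining factor is
\begin{equation*}
\Bigl|\Bigl(\frac{X_i-\vartheta_0}{X_i-\vartheta}\Bigr)_+^q-\Bigl(\frac{x-\vartheta_0}{x-\vartheta}\Bigr)_+^q\Bigr|(x-\vartheta)^q .
\end{equation*}
The function $u\mapsto (u-\vartheta_0)_+/(u-\vartheta)$ has derivative of order $|\vartheta-\vartheta_0|/(u-\vartheta)^2$, which is bounded for $u-\vartheta\ge r_n$ once $\delta_n\lesssim r_n$. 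Applying the map $s\mapsto s^q$, which is $(q\wedge1)$-Hölder near the relevant values, gives a bound of order $(d_n/r_n)^{q\wedge1}$. I expect this step to be the main obstacle. The ratio has to be controlled uniformly near the boundary $x\approx\vartheta+r_n$, including when $\vartheta<\vartheta_0$ and $(X_i-\vartheta_0)_+$ can vanish. I would first try separate case distinctions for $\vartheta\ge\vartheta_0$ and $\vartheta<\vartheta_0$. In the second case, where $x<\vartheta_0$ but $X_i$ may exceed $\vartheta_0$, the terms are bounded directly by $\|g\|_\infty(X_i-\vartheta_0)_+^q\le\|g\|_\infty (C_\kappa d_n)^q$. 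I would absorb any leftover contributions into $O_{\P}(\delta_n^q)$.

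Combining the three pieces yields the stated rate.
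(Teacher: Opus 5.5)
Your architecture matches the paper's. The paper also proves a bound uniform in $\vartheta$ (Lemma~\ref{lem-deriv-help}, over all $\vartheta\in[0,1]$, which spares you the localisation around $\vartheta_0$) and evaluates it at $\vartheta=\hat\vartheta_n$. It treats $[0,\hat\vartheta_n]$ exactly as you do, and it handles the noise term by an $\epsilon$-net in $(\vartheta,x)$ plus a Bernstein inequality after normalising by $(x-\vartheta)^{2q}$.

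The step that does not hold up as written is the bias bound. With $s(u)=(u-\vartheta_0)_+/(u-\vartheta)$, the derivative $|\vartheta-\vartheta_0|/(u-\vartheta)^2$ is of order $\delta_n/r_n^2$ on $\{u-\vartheta\ge r_n\}$. That is unbounded even if $\delta_n\lesssim r_n$. More importantly, the theorem contains no relation between $\delta_n$ and $r_n$:
it is claimed for every $r_n\downarrow0$ with $d_n=o(r_n)$; the first corollary takes $r_n$ to be a power of $d_n$ (up to logs); and for methods (1)--(3) $\delta_n$ does not involve $d_n$ at all. An argument that needs $\delta_n\lesssim r_n$ therefore proves a weaker statement. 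Likewise, ``absorbing leftover contributions into $O_{\P}(\delta_n^q)$'' has no justification.

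Neither that condition nor the case distinction is needed if you telescope instead of working with $s$, which is what the paper does. For $x\ge\vartheta+r_n$ and $X_i\in[x,x+C_\kappa d_n]$,
\[
\frac{(X_i-\vartheta_0)_+^q(x-\vartheta)^q}{(X_i-\vartheta)^q}-(x-\vartheta_0)_+^q
=\Bigl(\frac{x-\vartheta}{X_i-\vartheta}\Bigr)^q\bigl[(X_i-\vartheta_0)_+^q-(x-\vartheta_0)_+^q\bigr]
-(x-\vartheta_0)_+^q\Bigl[1-\Bigl(1-\frac{X_i-x}{X_i-\vartheta}\Bigr)^q\Bigr].
\]
The first term is at most $(q\vee1)(C_\kappa d_n)^{q\wedge1}$, because the prefactor is at most $1$. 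The second is at most $(q\vee1)C_\kappa d_n/r_n$, because $(x-\vartheta_0)_+\le1$ and $(X_i-x)/(X_i-\vartheta)\le C_\kappa d_n/r_n$. Both are $O((d_n/r_n)^{q\wedge1})$ uniformly in $\vartheta\in[0,1]$, and your case $x<\vartheta_0<X_i$ is covered automatically.

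Your ratio route can also be saved by keeping the factor $(x-\vartheta)^q$ inside the H\"older step. For $q\le1$, for instance, $(x-\vartheta)^q|s(X_i)-s(x)|^q\le(|\vartheta-\vartheta_0|C_\kappa d_n/(x-\vartheta))^q\le(C_\kappa d_n/r_n)^q$.

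Finally, a caveat you share with the paper's proof of Lemma~\ref{lem-deriv-help}. With $\kappa$ supported on $[0,C_\kappa]$, $d_n^{-1}E[\kappa((X_1-x)/d_n)]\to0$ as $x\to1$, and it vanishes at $x=1$. So the uniform lower bound of order $nd_n(x-\vartheta)^{2q}$ on the denominator, which your stochastic term needs, only holds for $x\le1-c\,d_n$. This is why the simulations restrict $\hat m_n$ to $[0,1-d_n/2]$.
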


\begin{cor}
With the same idea as in the proof of Lemma \ref{lem-deriv-pos} and Lemma \ref{lem-deriv} we get 
\begin{align*}
\int_0^1 |\hat m_n(x)-m(x)|dx=&\ O_\mathbb{P}(d_n^{q\wedge 1}r_n^{-(q\wedge1)})
+O_{\P}(\delta_n^q)
+O_{\P}\Big(\sqrt{\frac{\log(n)}{nd_n}}\Big)+\int_{\hat\vartheta_n}^{\hat\vartheta_n+r_n}\!\!|\hat m(x)-m(x)|dx\\
=&\ O_\mathbb{P}(d_n^{q\wedge 1}r_n^{-(q\wedge 1)})
+O_{\P}(\delta_n^q)
+O_{\P}\Big(\sqrt{\frac{\log(n)}{nd_n}}\Big) + O_\P(r_n\log(n))
\end{align*}
and
\begin{align*}
\int_0^1 (\hat m_n(x)-m(x))^2dx=&\ O_\mathbb{P}(d_n^{2(q\wedge 1)}r_n^{-2(q\wedge 1)})
+O_{\P}(\delta_n^{2q})
+O_{\P}\Big(\frac{\log(n)}{nd_n}\Big)+\int_{\hat\vartheta_n}^{\hat\vartheta_n+r_n}\!\!(\hat m(x)-m(x))^2dx\\
=&\ O_\mathbb{P}(d_n^{2(q\wedge 1)}r_n^{-2(q\wedge 1)})
+O_{\P}(\delta_n^{2q})
+O_{\P}\Big(\frac{\log(n)}{nd_n}\Big) + O_\P(r_n\log(n))
\end{align*}
for all $r_n\downarrow 0$ with $d_n=o(r_n)$. Choosing $r_n=d_n^{\frac{q\wedge 1}{1+(q\wedge 1)}}\log(n)^{-\frac{1}{1+(q\wedge 1)}}$, respectively $r_n=d_n^{\frac{2(q\wedge 1)}{1+2(q\wedge 1)}}\log(n)^{-\frac{1}{1+2(q\wedge 1)}}$, it thus holds
\begin{align*}
\int_0^1 |\hat m_n(x)-m(x)|dx =&\ O_\mathbb{P}(d_n^{\frac{q\wedge 1}{1+(q\wedge 1)}}\log(n)^{\frac{q\wedge 1}{1+(q\wedge 1)}})
+O_{\P}(\delta_n^q)
+O_{\P}\Big(\sqrt{\frac{\log(n)}{nd_n}}\Big)\\
\int_0^1 (\hat m_n(x)-m(x))^2dx=&\ O_\mathbb{P}(d_n^{\frac{2(q\wedge 1)}{1+2(q\wedge 1)}}\log(n)^{\frac{2(q\wedge 1)}{1+2(q\wedge 1)}})
+O_{\P}(\delta_n^{2q})
+O_{\P}\Big(\frac{\log(n)}{nd_n}\Big).
\end{align*}
\end{cor}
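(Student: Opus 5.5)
The corollary has two parts: the $L_1$ bound and the $L_2$ bound. For each, I split $[0,1]$ into the region $[0,\hat\vartheta_n]\cup[\hat\vartheta_n+r_n,1]$ and the strip $[\hat\vartheta_n,\hat\vartheta_n+r_n]$. On the first region I bound the integrand by its supremum and apply Theorem \ref{theoregr}. Since the interval has length at most one, this gives the first three terms of the $L_1$ bound. For the squared integrand I use $(a+b+c)^2\le 3(a^2+b^2+c^2)$, which gives the first three terms of the $L_2$ bound. This leaves the remaining integral over the strip, so the first display holds exactly as written.

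The main work is to show that the strip integral is $O_\P(r_n\log n)$ in both cases. Since the strip has length $r_n$, it suffices to show $\sup_{x\in[0,1]}|\hat m_n(x)-m(x)|=O_\P(\log n)$; for the square, $O_\P(\log n)$ already suffices if the sup is $O_\P(\sqrt{\log n})$. The term $m$ is bounded. For $\hat m_n(x)=(x-\hat\vartheta_n)_+^q\hat g_{\hat\vartheta_n}(x)$ I use \eqref{hat-g}, with $w_i=\kappa((X_i-x)/d_n)\ge0$ and $a_i=(X_i-\hat\vartheta_n)_+^q$. For $x>\hat\vartheta_n$ and $X_i$ in the kernel window $[x,x+C_\kappa d_n]$, we have $a_i\ge (x-\hat\vartheta_n)^q$. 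Hence
\[
|\hat m_n(x)|\le \frac{\sum_i |Y_i|w_i a_i (x-\hat\vartheta_n)^q}{\sum_i w_i a_i^2}\le \frac{\sum_i |Y_i| w_i a_i^2}{\sum_i w_ia_i^2}\le \max_{i}|Y_i|.
\]
Under the moment condition \eqref{ass2}, the errors have exponential-type moments, so $\max_i|\varepsilon_i|=O_\P(\log n)$. Together with the boundedness of $m$, this gives the uniform $O_\P(\log n)$ bound on the strip, and $O_\P(\log^2 n)$ for the square. For the $L_2$ statement I therefore expect to need a sharper argument to keep $r_n\log n$. I would either use that $\hat g$ is a weighted average, so its square is bounded by a weighted average of $Y_i^2$, and then bound $\frac1{r_n}\int(\cdot)\,dx$ in expectation via the kernel sums (as in the proofs of Lemmas \ref{lem-deriv-pos} and \ref{lem-deriv}), giving an $O_\P(1)$ average. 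Alternatively, I would accept a $\log$ power, which matches the paper's convention of ignoring logarithmic factors. This step is the main obstacle.

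Finally I plug in the stated $r_n$. For the $L_1$ case, $r_n=d_n^{a/(1+a)}\log(n)^{-1/(1+a)}$ with $a=q\wedge1$ gives $r_n\log n=d_n^{a/(1+a)}\log(n)^{a/(1+a)}$. Also $d_n^a r_n^{-a}=d_n^{a-a^2/(1+a)}\log(n)^{a/(1+a)}=d_n^{a/(1+a)}\log(n)^{a/(1+a)}$, so the two terms balance. The condition $d_n=o(r_n)$ holds because $a/(1+a)<1$ and the logarithm is negligible against polynomial powers. The $L_2$ case works the same way with $2a$ in place of $a$: $r_n^{-2a}d_n^{2a}$ and $r_n\log n$ both become $d_n^{2a/(1+2a)}\log(n)^{2a/(1+2a)}$. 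This yields the final displayed rates.
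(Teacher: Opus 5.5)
Your decomposition, the $L_1$ part, and the balancing of $r_n$ are correct and follow the paper's route. The paper also applies Theorem \ref{theoregr} off the strip and bounds the strip by its length times $(2\sup_x|m(x)|+\max_i|\varepsilon_i|)^2$, as it does for the terms \eqref{regression5} and \eqref{regression3} in the proofs of Lemmas \ref{lem-deriv-pos} and \ref{lem-deriv}. Your weight argument giving $|\hat m_n(x)|\le\max_i|Y_i|$ is a clean version of that bound.

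The gap is the $L_2$ strip term, which you leave open. From $\max_i|\varepsilon_i|=O_\P(\log n)$ you only get $O_\P(r_n\log^2 n)$. Fallback (b) does not prove the statement: with the prescribed $r_n=d_n^{2a/(1+2a)}\log(n)^{-1/(1+2a)}$, $a=q\wedge 1$, one has $r_n\log^2 n=d_n^{2a/(1+2a)}\log(n)^{(1+4a)/(1+2a)}$. That is a full factor $\log n$ above the claimed rate, so the final $L_2$ display would be false as stated.

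The paper closes this step with the maximal bound $\max_i\varepsilon_i^2=O_\P(\log n)$, i.e.\ $\max_i|\varepsilon_i|=O_\P(\sqrt{\log n})$, which gives $O_\P(r_n\log n)$ for both strips at once. Your caution is nevertheless well founded. That bound is of sub-Gaussian type: it holds, e.g., if $E\exp(c\varepsilon^2)<\infty$, but fails for sub-exponential errors such as Laplace errors, where $\max_i\varepsilon_i^2$ is of order $\log^2 n$. To use it you must read \eqref{ass2} as a sub-Gaussian moment condition. Read literally, with arbitrary constants $C_l$, \eqref{ass2} only says that all moments are finite, which gives neither your $O_\P(\log n)$ nor the paper's bound.

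The alternative is to carry out your option (a), which would even remove the logarithm. Set $\omega_i(x)=w_ia_i^2/\sum_j w_ja_j^2$ and use $(x-\hat\vartheta_n)^q/a_i\le 1$ in the kernel window. Jensen then gives $\hat m_n(x)^2\le\sum_i\omega_i(x)Y_i^2\le 2\sup_x m^2(x)+2\sum_i\omega_i(x)\varepsilon_i^2$, so what you need is
\[
\sup_{|\vartheta-\vartheta_0|\le\eta}\ \sup_{x\in[\vartheta,\vartheta+r_n]}\ \sum_i\omega_i(x)\varepsilon_i^2=O_\P(1)
\]
for some $\eta>0$. The bound must be uniform in $\vartheta$ because $\hat\vartheta_n$ is data-dependent; an expectation bound at fixed $\vartheta$ does not transfer. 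It requires two ingredients.

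First, you need a lower bound $\frac1{nd_n}\sum_i\kappa(\frac{X_i-x}{d_n})(X_i-\vartheta)_+^{2q}\ge c\,(x-\vartheta+d_n)^{2q}$, uniformly with probability tending to one. This is needed precisely in the region $x<\vartheta+r_n$ that Lemma \ref{lem-deriv-help} excludes. It follows from $\int\kappa(u)u^{2q}\,du>0$, $\inf f_X>0$, and a uniform law of large numbers over the class indexed by $(x,\vartheta)$.

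Second, you need $\sup_x\frac1{nd_n}\sum_i\kappa(\frac{X_i-x}{d_n})\varepsilon_i^2=O_\P(1)$.

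Combined with $a_i\le(x-\vartheta+C_\kappa d_n)^q$, these bound the strip integral by $O_\P(r_n)$ without any maximal inequality. As written, though, this step is only announced, so your proposal does not establish the $L_2$ half of the corollary.
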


\begin{cor}\label{cor:regression_fixed}
For fixed $x$ the estimator $\hat m_n$ is consistent.
For fixed $x\in[0,1]\setminus \{\vartheta_0\}$ it holds
$$|\hat m_n(x)-m(x)|= O_\mathbb{P}(d_n^{q\wedge 1})
+O_{\P}(\delta_n^q)
+O_{\P}\Big(\sqrt{\frac{1}{nd_n}}\Big),$$
which follows with the same arguments as in the proof of Theorem \ref{theoregr} and Lemma \ref{lem-deriv-help}. 
For $x=\vartheta_0$ we obtain 
$$\hat m_n(x)-m(x)=\hat m_n(\vartheta_0)=\left(O_\P(1)+O_\P\Big(\frac{1}{n^{1/2}d_n^{q+1/2}}\Big)+O_\mathbb{P}\Big(\frac{\delta_n^{q\wedge 1}}{d_n^{2q}}\Big)\right)O_\P(\delta_n^q).$$
The proof for the last line is given in the appendix.
The bandwidth $d_n$ can be chosen such that the rate in the last line is always $o(1)$, so $\hat m_n$ is a consistent regression estimator. Note to this end, that $\delta_n$ does not depend on $d_n$ for methods (1)-(3). For method (4) and (5) there is also a bandwidth in the estimation of $M_n$, and thus in the rate of $\delta_n$, which we call $\tilde d_n$ for now, but this bandwidth $\tilde d_n$ does not have to be the same as the bandwidth $d_n$ used for the estimation of $\hat m_n$. Therefore, choose $\tilde d_n$ as the optimal bandwidth for estimating $M_n$ and then choose $d_n$ appropriately such that $\hat m_n$ is a consistent estimator for $m$.
\end{cor}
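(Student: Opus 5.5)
The plan is to exploit the explicit ratio form of $\hat g_\vartheta$ in (\ref{hat-g}). Since $m(\vartheta_0)=0$, we have
$$\hat m_n(\vartheta_0)-m(\vartheta_0)=\hat m_n(\vartheta_0)=(\vartheta_0-\hat\vartheta_n)_+^q\,\hat g_{\hat\vartheta_n}(\vartheta_0).$$
On the event $\hat\vartheta_n\geq\vartheta_0$ this quantity is zero. So it suffices to work on $\{\hat\vartheta_n<\vartheta_0\}$. There the prefactor satisfies $(\vartheta_0-\hat\vartheta_n)^q=O_{\P}(\delta_n^q)$. The task therefore reduces to showing
$$\hat g_{\hat\vartheta_n}(\vartheta_0)=O_\P(1)+O_\P\big(n^{-1/2}d_n^{-q-1/2}\big)+O_\P\big(\delta_n^{q\wedge1}d_n^{-2q}\big).$$
Because $\kappa$ is one-sided with support $[0,C_\kappa]$, only indices with $X_i\in[\vartheta_0,\vartheta_0+C_\kappa d_n]$ contribute. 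For those indices $X_i-\hat\vartheta_n\geq X_i-\vartheta_0\geq 0$, so no positive parts are active.

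\textbf{Denominator.} Write $D_n=\sum_i\kappa(\frac{X_i-\vartheta_0}{d_n})(X_i-\hat\vartheta_n)^{2q}$. Monotonicity gives
$$D_n\geq \sum_i\kappa\Big(\frac{X_i-\vartheta_0}{d_n}\Big)(X_i-\vartheta_0)^{2q}.$$
The right-hand side no longer involves $\hat\vartheta_n$. After dividing by $nd_n^{2q+1}$ it is a kernel average with mean tending to $f_X(\vartheta_0)\int\kappa(u)u^{2q}\,du>0$, by (\ref{ass1}). Its variance is $O(1/(nd_n))\to0$. Hence $D_n^{-1}=O_\P\big((nd_n^{2q+1})^{-1}\big)$.

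\textbf{Numerator.} Insert $Y_i=g(X_i)(X_i-\vartheta_0)^q+\eps_i$ and split the numerator into three parts.

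First, the signal part. Since $(X_i-\vartheta_0)^q\leq(X_i-\hat\vartheta_n)^q$ and $g\geq0$ is bounded,
$$\sum_i\kappa\, g(X_i)(X_i-\vartheta_0)^q(X_i-\hat\vartheta_n)^q\leq \sup g\cdot D_n.$$
This yields the $O_\P(1)$ term directly, without needing any rate.

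Second, the noise part. Decompose $(X_i-\hat\vartheta_n)^q=(X_i-\vartheta_0)^q+R_i$. The term with $(X_i-\vartheta_0)^q$ is a centered sum with deterministic weights, because $E[\eps_i\mid X_i]=0$. Its conditional variance is of order $\sigma^2 n d_n^{2q+1}$, so it is $O_\P\big(n^{1/2}d_n^{q+1/2}\big)$. Dividing by $D_n$ gives $O_\P\big(n^{-1/2}d_n^{-q-1/2}\big)$.

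Third, the remainder $R_i$. This is the step I expect to be the main obstacle, because $\hat\vartheta_n$ is data-dependent and correlated with the $\eps_i$. I would avoid any centering argument and bound it crudely in absolute value. For $a=X_i-\vartheta_0\in[0,C_\kappa d_n]$ and $h=\vartheta_0-\hat\vartheta_n=O_\P(\delta_n)$, the elementary inequality $|(a+h)^q-a^q|\leq C h^{q\wedge1}$ holds for $a,h$ bounded. For $q>1$ this uses that $(a+h)^{q-1}$ is bounded, which is where the exponent $q\wedge1$ appears. This gives
$$\Big|\sum_i\kappa\,\eps_iR_i\Big|\leq C\delta_n^{q\wedge1}\sum_i\kappa\Big(\frac{X_i-\vartheta_0}{d_n}\Big)|\eps_i|=O_\P\big(\delta_n^{q\wedge1}\,nd_n\big),$$
using $E|\eps|<\infty$ and the law of large numbers for the kernel sum. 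Dividing by $D_n$ produces $O_\P\big(\delta_n^{q\wedge1}d_n^{-2q}\big)$.

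Combining the three parts gives the stated bound on $\hat g_{\hat\vartheta_n}(\vartheta_0)$. Multiplying by $O_\P(\delta_n^q)$ then completes the proof of the last display.
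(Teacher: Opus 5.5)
Your argument is correct and is essentially the paper's appendix proof: the same reduction to the event $\{\hat\vartheta_n<\vartheta_0\}$, the same lower bound on the denominator via $X_i-\hat\vartheta_n\geq X_i-\vartheta_0$, and the same centered noise term with variance of order $nd_n^{2q+1}$ plus a remainder controlled by $|\hat\vartheta_n-\vartheta_0|^{q\wedge 1}$. The only difference is in the signal part: you bound it directly by $\sup g$ times the denominator, whereas the paper replaces $(X_i-\hat\vartheta_n)^q$ by $(X_i-\vartheta_0)^q$ in the whole numerator and puts the difference into a remainder $R_n$ involving $|Y_i|$; your version is slightly cleaner but gives the same rate.
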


\section{Simulations}\label{sec-bootstrap}

Although the considered estimators $\hat\vartheta_n$ are consistent, they have a bias effect, and simulation results show that the estimators are too large. This is often the case in change point estimation because for detection of the change some distance to the change point is needed. We consider a bootstrap bias approximation to obtain better estimation results. 
Based on the original sample $(X_i,Y_i)$, $i=1,\dots,n$, we estimate $m$ by some consistent estimator $\hat m_n$ and the change point $\vartheta_0$ by some consistent $\hat\vartheta_n$. For each $b\in\{1,\dots,B\}$ create a residual bootstrap sample $(X_i,Y_i^*)$, $i=1,\dots,n$, with a regression change in $\hat\vartheta_n$: 
$$Y_i^*=\left\{\begin{array}{l} \hat\varepsilon_i^*\mbox{, if } X_i\leq\hat\vartheta_n\\ \hat m_n(X_i)+\hat\varepsilon_i^*\mbox{, if } X_i>\hat\vartheta_n\end{array}\right.$$
based on bootstrap residuals $\hat\varepsilon_1^*,\dots,\hat\varepsilon_n^*$ drawn with replacement from $\hat\varepsilon_i=Y_i-\hat m_n(X_i)$, $i=1,\dots,n$. A similar bootstrap procedure is e.\,g.\ used in \cite{AntochEtal} for estimating a change in the mean of $X_1,\ldots,X_n$.
We consider now a homoscedastic model; in heteroscedastic models wild bootstrap is more suitable. From the bootstrap data estimate the change point by $\hat\vartheta_{n,b}^*$. Then we approximate the bias by the median or mean of $\hat\vartheta_{n,b}^*-\hat\vartheta_n$, $b=1,\dots,B$, say $\hat{bias}$, and 
and replace $\hat\vartheta_n$ by $\hat\vartheta_n-\hat{bias}$.
The following graphics show the example with $n=400$, $B=500$, uniformly distributed covariates, normally distributed errors with $\sigma=0.2$. In the first example demonstrated in Figure \ref{fig:bootstrap1} the regression function is rather flat, $m(x)=0.5(x-0.4)_+$, and we used   $t_n=\sigma/n^{0.49}$. In the second example demonstrated in Figure \ref{fig:bootstrap2} the regression function is $m(x)=2(x-0.4)_+$ and thus the change point easier to estimate. We used $t_n=0.5\sigma/n^{0.49}$. 
In both cases the change point is $\vartheta_0=0.4$ and we applied method \ref{sec-Mn-1}. To generate the residuals for the bootstrap data we applied a local linear regression estimator $\hat m_n$ with cross-validation bandwidth choice.  The left graphics are one sample example with the true regression function. The other graphics are based on 100 replications. In the middle graphic the left boxplot is based on the 100 estimators $\hat \vartheta_n$, the middle boxplot on the median-based bias reduction and the right boxplot on the mean-based bias reduction. Those estimators are closer to the true change point. The right graphics demonstrate estimated densities of the estimators. The density ``1'' is based on the original estimators $\hat\vartheta_n$, ``2'' on the median-based bias reduction, and ``3'' on the mean-based bias reduction. 
In the first example demonstrated in Figure \ref{fig:bootstrap1} the median (mean, variance) of the 100 estimators are  0.6840 (0.6850, 0.004) for the original estimator, 0.5600  (0.5515, 0.007) for the median-based bias reduction, and 0.5563  (0.5535, 0.007) for the mean-based bias reduction. In the second example demonstrated in Figure \ref{fig:bootstrap2} the median (mean, variance) of the 100 estimators are  0.4685 (0.4375, 0.01) for the original estimator, 0.4255 (0.3854, 0.016) for the median-based bias reduction, and 0.45544 (0.41293, 0.018) for the mean-based bias reduction. 

\begin{figure}
    \centering
    \includegraphics[width=15cm]{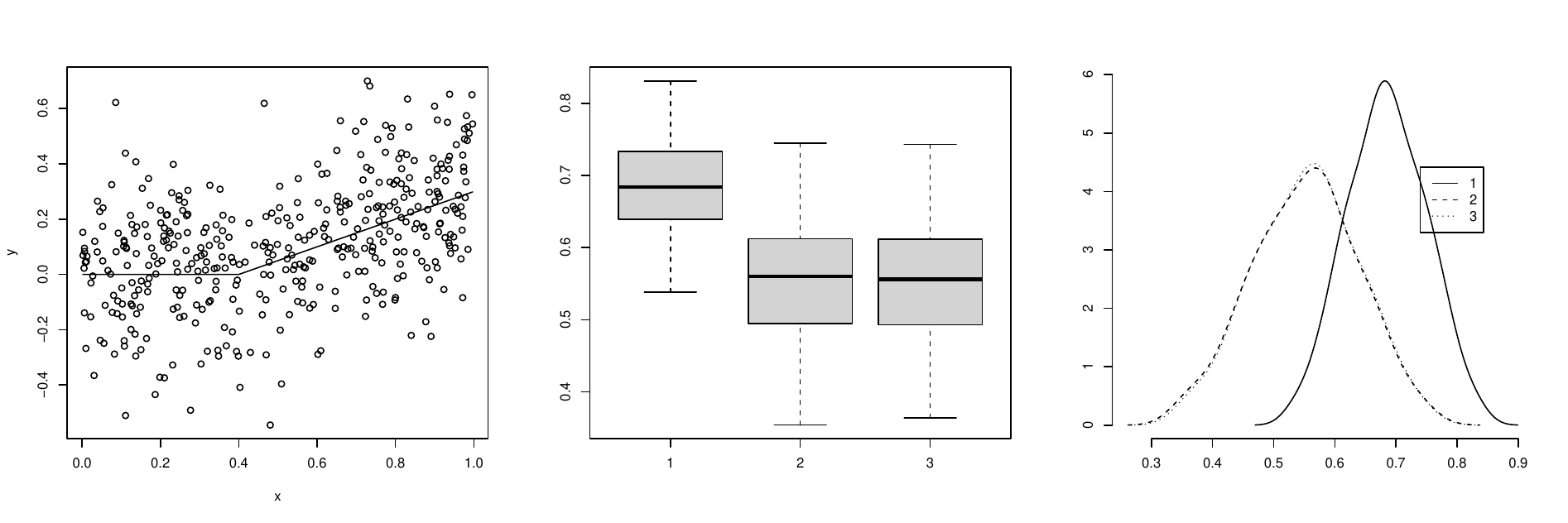}
    \caption{
    \it First example; left: scatterplot of observations and true regression function; middle: boxplots (left $\hat\vartheta_n$, middle median-based bias reduction estimator, right mean-based bias reduction restimator), right: 1 density $\hat\vartheta_n$, 2 density median-based bias reduction estimator, 3 mean-based bias reduction estimator.}
    \label{fig:bootstrap1}
\end{figure}

\begin{figure}
    \centering
    \includegraphics[width=15cm]{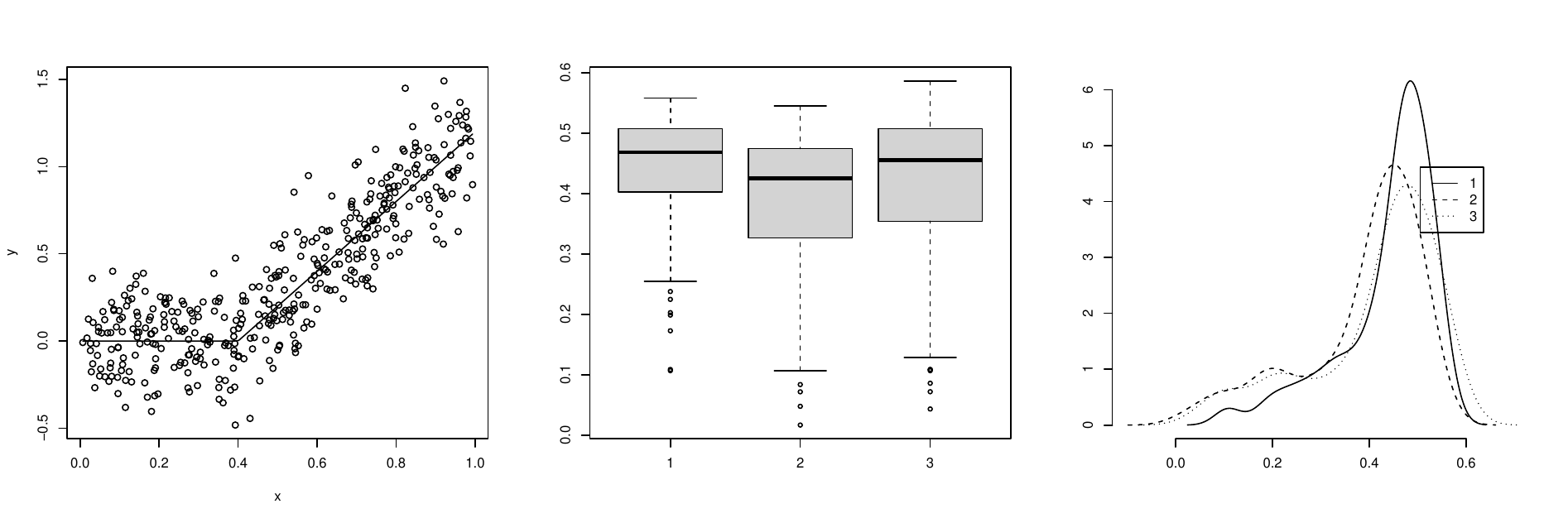}
    \caption{
    \it Second example; left: scatterplot of observations and true regression function; middle: boxplots (left $\hat\vartheta_n$, middle median-based bias reduction estimator, right mean-based bias reduction restimator), right: 1 density $\hat\vartheta_n$, 2 density median-based bias reduction estimator, 3 mean-based bias reduction estimator.}
    \label{fig:bootstrap2}
\end{figure}

To compare the five different methods to estimate the change point proposed in Section \ref{sec:methods} we simulate data with $m(x)=(x-0.4)_+$, $X\sim \mathcal{U}[0,1]$, $\varepsilon\sim\mathcal N(0,0.2^2)$ and apply all five methods. The results for sample size $n=100$ and $n=200$ are shown in Figure \ref{fig:compMeth}. All results are based on 300 independent repetitions and the bias reduction on 300 bootstrap replications. Since the results with median-based bias reduction and mean-based bias reduction are very similar in this setup, we only present the results without bias reduction and with median-based bias reduction to improve clarity.
The detailed simulation setup is as follows. 
The threshold $t_n$ is chosen as $t_n=n^{0.01}\cdot \text{'rate'}\cdot \text{'sd'}$, where 'rate' is the rate of $\hat M_n(\vartheta)-M(\vartheta)$ (e.\,g.\ $\text{'rate'}=n^{-1/2}$ for method (1) and (2)) and 'sd' is the empirical standard deviation of the data that $M_n$ is based on (e.\,g.\ $(Y_1,\ldots,Y_n)$ for method (1) and $(\hat m_n(X_1)^2,\ldots,\hat m_n(X_n)^2)$ for method (2)). To determine $\hat m_n$ we use a local linear kernel estimator with automatic bandwidth selection. The bandwidth $d_n$ in \eqref{hat-g} used for method (4) and (5) is chosen by a rule of thumb as $d_n=1.06\cdot n^{-1/2}\cdot \text{'sd$(Y)$'}$ for method (4) and $d_n=1.06\cdot n^{-3/5}\cdot \text{'sd$(Y)$'}$ for method (5), where 'sd$(Y)$' is the empirical standard deviation of $(Y_1,\ldots,Y_n)$.  

\begin{figure}
    \centering
    \includegraphics[width=0.9\textwidth]{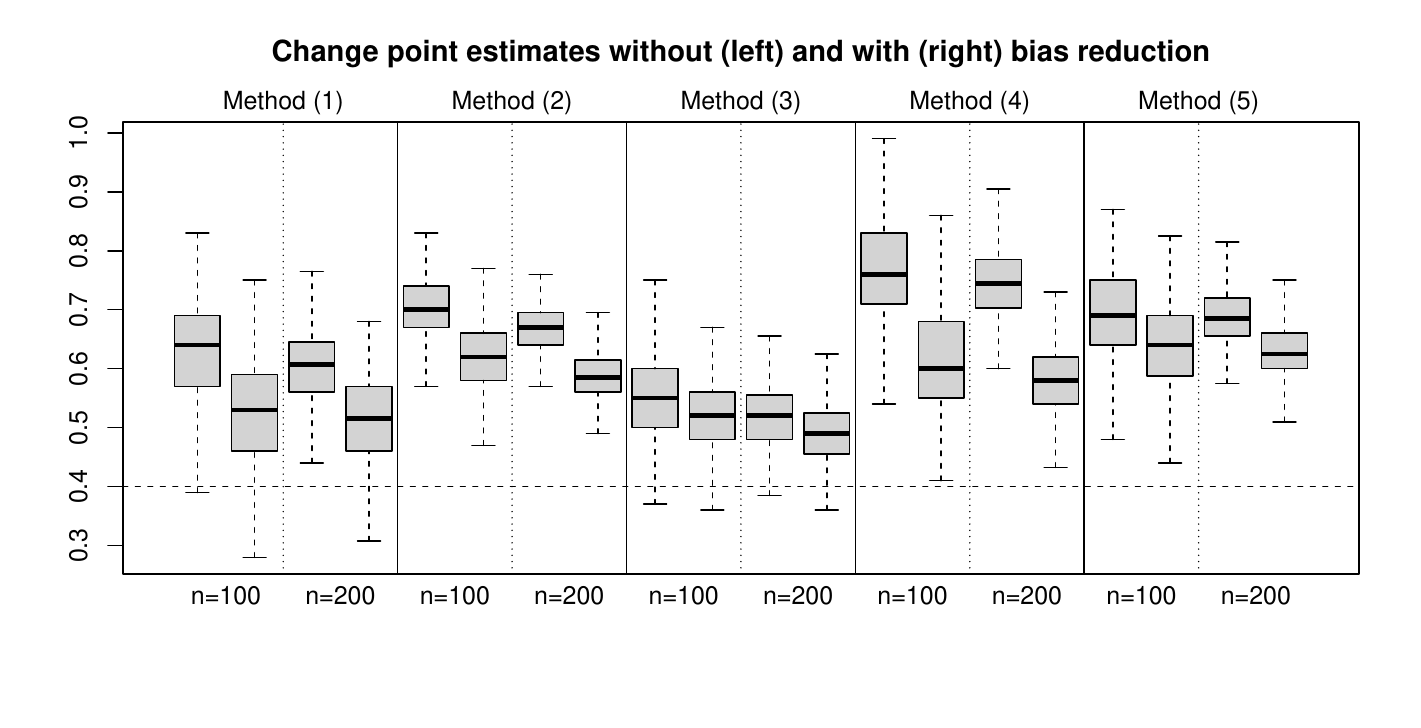}
    \caption{\it Results of change point estimators in 300 independent repetitions. The horizontal dashed line marks the true change point $\vartheta_0=0.4$. }
    \label{fig:compMeth}
\end{figure}

It is obvious that the size of the threshold $t_n$ has a high influence on the estimation performance. To illustrate this, we give estimation results for method (1), (2) and (3) with $t_n=c\cdot n^{0.01}\cdot \text{'rate'}\cdot \text{'sd'}$, $n=200$, for different values of $c$ in Figure \ref{fig:compTresh}.

\begin{figure}
    \centering
    \includegraphics[width=\textwidth]{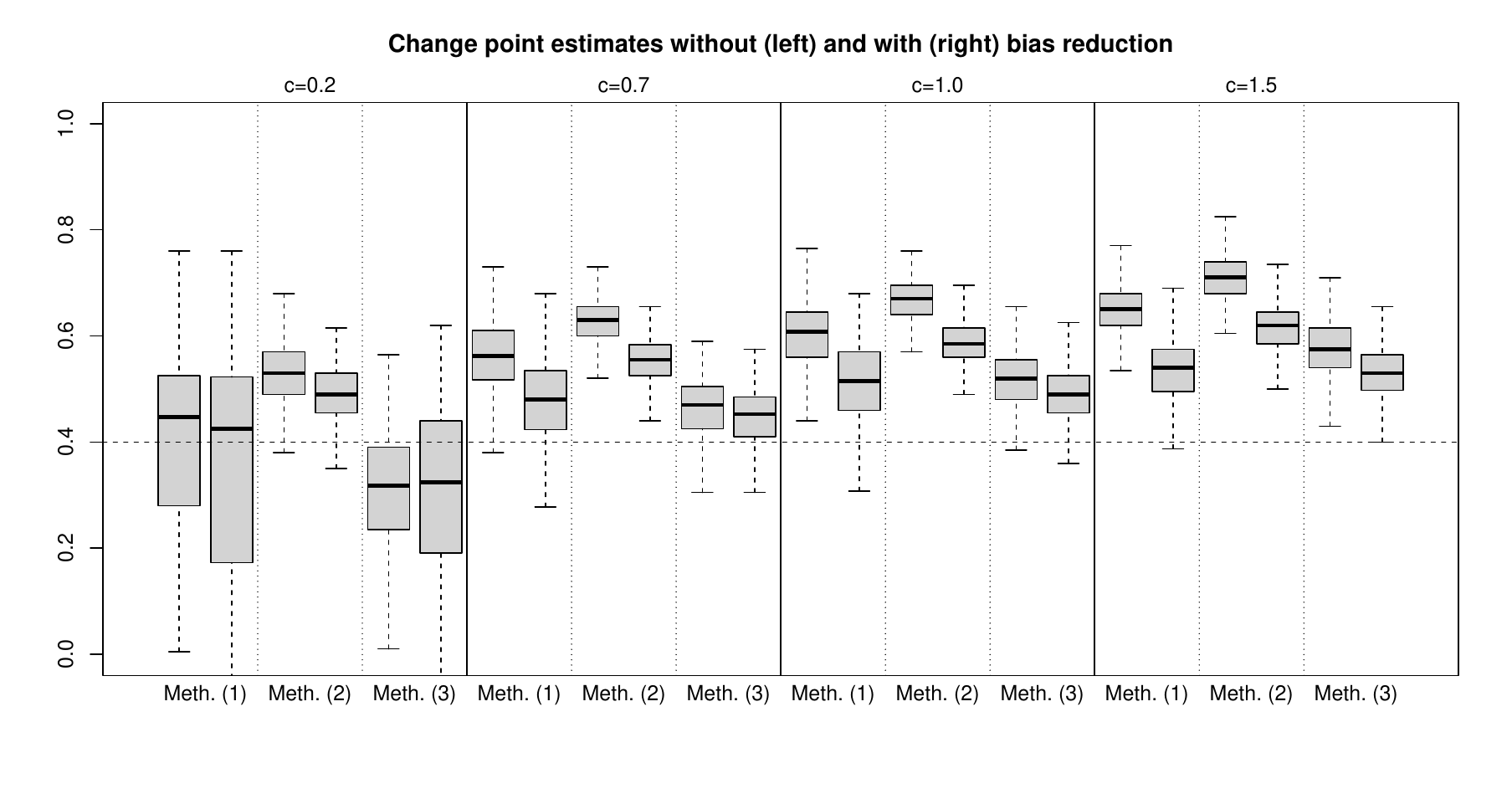}
    \caption{\it Results of change point estimators in 300 independent repetitions with different threshold $t_n$. The horizontal dashed line marks the true change point $\vartheta_0=0.4$.}
    \label{fig:compTresh}
\end{figure}

Likewise, the form of the true regression function influences the estimation performance, as can be seen in Figure \ref{fig:compRegr}. Results from data simulated with $m(x)=(x-0.4)_+^q$, $n=200$, for different values of $q$ are shown. The smaller $q$ is, the more pronounced is the change point and thus the easier the change point can be estimated.

\begin{figure}
    \centering
    \includegraphics[width=0.8\textwidth]{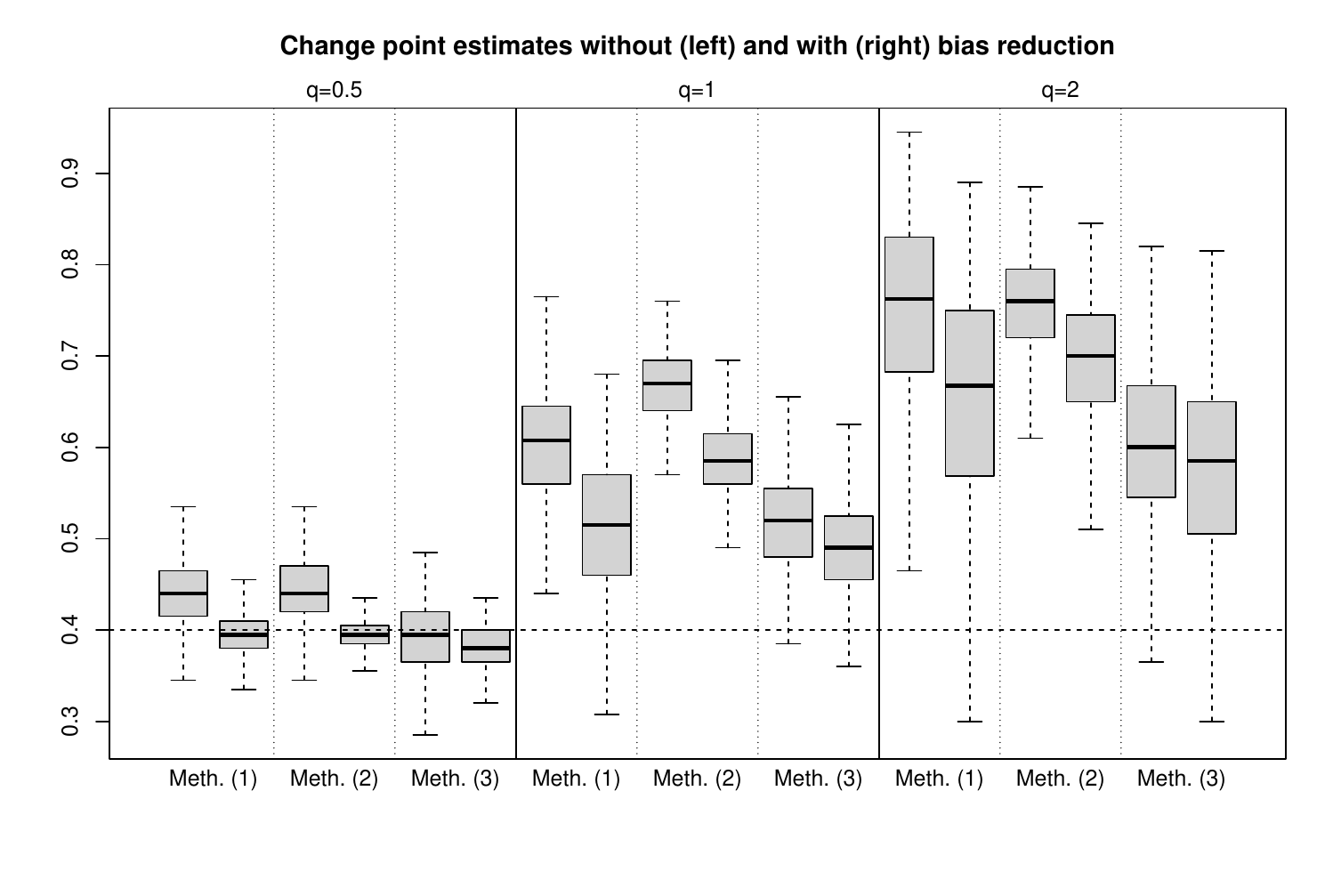}
    \caption{\it Results of change point estimators in 300 independent repetitions with different regression function $m$. The horizontal dashed line marks the true change point $\vartheta_0=0.4$.}
    \label{fig:compRegr}
\end{figure}

\subsection{Regression estimation}

In this subsection we show some simulation results for $\hat m_n$ as proposed in Section \ref{chap:regression}. The simulation setup is again $m(x)=(x-0.4)_+$, $X\sim \mathcal{U}[0,1]$, $\varepsilon\sim\mathcal{N}(0,0.2^2)$, $n=200$. Since $\hat m_n$ is based on $\hat\vartheta_n$ we first estimate this value by method (3) with median-based bias reduction based on 300 bootstrap replications. The estimation of $\hat\vartheta_n$ is repeated for each run. In Figure \ref{fig:regression} the results of four independent runs are presented. Our estimator $\hat m_n$ is compared to classical nonparametric regression estimators. As expected the classical estimators have some problems in the area where $m\equiv 0$. Additionally, the typical boundary effect of the classical kernel estimators is visible, where the local linear version  performs better than  the local constant one near 0 and 1. Our estimator $\hat m_n$ also shows this effect near 1, which is even worse due to the one-sided kernel. In the extreme case $x=1$ with high probability not a single observation can be used for the estimation. Therefore, we restrict the estimation of $\hat m_n$ to the interval $[0,1-d_n/2]$. 

\begin{figure}
    \centering
    \includegraphics[width=\linewidth]{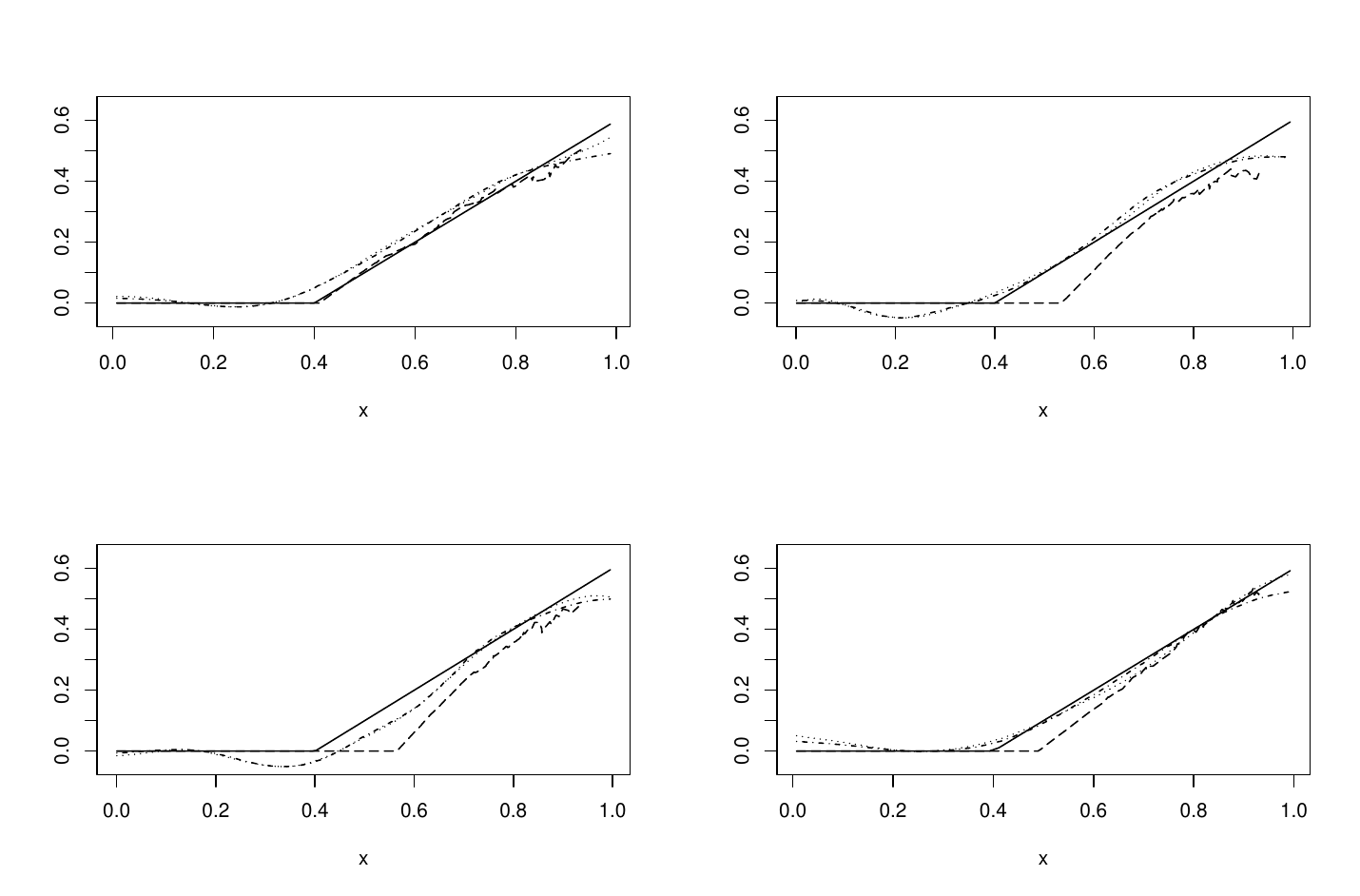}
    \caption{\it Estimation results for $\hat m_n$ as proposed in Section \ref{chap:regression} for four independent runs. The solid line is the true regression function. The dashed line is our estimator $\hat m_n$ (which depends on $\hat\vartheta_n$) where $\hat\vartheta_n$ is estimated by method (3) with bias reduction based on 300 bootstrap repetitions. As competitors we compare to a classic local linear (dotted line) and a local constant (dashed-dotted line) regression estimator with automatic bandwidth selection.  }
    \label{fig:regression}
\end{figure}

\section{Application in two sample models and real data analysis}\label{sec-application}

We consider two independent regression models
\beq
Y_{1,i}&=&m_1(X_{1,i})+\e_{1,i},\; i=1,\dots,n_1,\\
Y_{2,i}&=&m_2(X_{2,i})+\e_{2,i},\; i=1,\dots,n_2,
\eeq
with  design points in $[0,1]$. In this section we consider random design, but as discussed in Section \ref{fixed-design} it can be modified for the fixed design case. The two regression functions are assumed to be equal on some area but differ on some other, i.\,e.\ we assume that there exists some $\vartheta_0\in(0,1)$ and some $\Delta>0$ such that
\beq
m_1(x)&=&m_2(x)\quad \forall  x\in[0,\vartheta_0]\\
m_1(x)&\neq& m_2(x)\quad \forall x\in(\vartheta_0,\vartheta_0+\Delta).
\eeq
A typical example where such data occurs is the growth of children where $(X_1,Y_1)$ models the age and height of boys and $(X_2,Y_2)$ the age and height of girls. For younger children the relation between age and height is the same for both sexes but from some age on it is different. We consider a real data example showing this effect below. The same data set has been considered in \cite{Hlavka-Huskova} using parametric, heteroscedastic regression models to test for a change in the jumping speed - age relation. 

We are interested in estimating the change point $\vartheta_0$. In the case of the same covariates $X_i=X_{1,i}=X_{2,i}$ for all $i= 1,\dots,n=n_1=n_2$ one can define
\beq
Z_i:=Y_{1,i}-Y_{2,i}&=&(m_1-m_2)(X_i)+\varepsilon_{i,1}-\varepsilon_{i,2}\\
&=:&m(X_i)+\varepsilon_i
\eeq
where $m=m_1-m_2$ is assumed to be a continuous function that fulfills $m(x)=0$ for $x\in [0,\vartheta_0]$ and $m(x)\neq 0$ for $x\in(\vartheta_0,\vartheta_0+\Delta)$. Then methods (1)--(5) from Subsections \ref{method1}--\ref{method5} can be applied directly. 

In the case of different covariate values we now use the notations $M_{n_1,n_2}$, $\hat\vartheta_{n_1,n_2}$ and $t_{n_1,n_2}$ instead of $M_n$, $\hat\vartheta_n$ and $t_n$. One can still define a difference regression function $m=m_1-m_2$. Under the assumption that $m$ is zero on $[0,\vartheta_0]$, nondecreasing on $[0,1]$, and strictly increasing on $(\vartheta_0,\vartheta_0+\Delta)$ one can analogously as in Subsection \ref{method3} apply $$M_{n_1,n_2}(\vartheta)=\hat m_{1,n_1}(\vartheta)-\hat m_{2,n_2}(\vartheta)\mbox{ and } M(\vartheta)=m(\vartheta)$$ with nonparametric regression functions $\hat m_{1,n_1}$ in the first sample and $\hat m_{2,n_2}$ in the second sample (method (3a)). 

Similar to Subsection \ref{method2} one can use 
$$M_{n_1,n_2}(\vartheta)=\int_0^\vartheta (\hat m_{1,n_1}(x)-\hat m_{2,n_2}(x))^2\,d x \mbox{ and } M(\vartheta)=\int_{\vartheta_0}^\vartheta m^2(x)\,dxI\{\vartheta>\vartheta_0\}$$
(method (2a)).
As in Subsection \ref{method1} one can apply 
$$M_{n_1,n_2}(\vartheta)=\frac{1}{n_1}\sum_{i=1}^{n_1} Y_{1,i}I\{X_{1,i}\leq\vartheta\}-\frac{1}{n_2}\sum_{i=1}^{n_2} Y_{2,i}I\{X_{2,i}\leq\vartheta\},$$
which estimates
$$M(\vartheta)=\int_{0}^\vartheta m_1(x)\, dF_{X_1}(x)-\int_{0}^\vartheta m_2(x)\, dF_{X_2}(x)=\int_{\vartheta_0}^\vartheta m(x)\, dF_X(x)I\{\vartheta>\vartheta_0\}$$
(method (1a)), where the last equality holds if the covariate distributions are the same $F_X$ in both samples. If both covariate distributions differ, method (1a) possibly detects a difference in the distribution rather than in the regression function. Assume for this sake that $f_1$ is the density od $X_1$, $f_2$ the density of $X_2$ and that there  exists some $\vartheta_F<\vartheta_0$ with $f_1(x)=f_2(x)$ for $x\in[0,\vartheta_F]$ and $f_1(x)>f_2(x)$ for $x\in(\vartheta_F,\vartheta_F+\Delta)$.
Then method (1a) would detect $\vartheta_F$.

We applied method (1a), (2a) and (3a) on a real data set containing the exact age and height of 781 children and adolescents (426 girls and 355 boys) aged 6-19 years. The data was collected in a study on muscle function assessed using jumping mechanography with pupils from Prague and \'{U}pice, a small city in eastern Czech Republic. For a thorough analysis of the data set see \cite{SumnikEtal2013}. In Figure \ref{fig:BoysGirls} the relation between the age and the height for girls vs boys is shown. It can be seen, that from the age of approximately 13 on the estimated regression function differs for both sexes. The results from our estimation procedures are shown in Table \ref{tab:BoysGirls}. 
We scaled the $X$-values on $[0,1]$ to fit our model assumptions and after calculating $\hat\vartheta_{n_1,n_2}$ rescaled to interpret the change point $\hat\vartheta_{n_1,n_2}(Age)$. As the data appears to contain heteroscedasticity, we use wild bootstrap for the bias reduction. In the same way as described for the simulations in Section \ref{sec-bootstrap} we chose $t_{n_1,n_2}=c\cdot \sqrt{(n_1^{0.01}\cdot \text{'rate'}_{n_1}\cdot\text{'sd'}_1)^2+(n_2^{0.01}\cdot \text{'rate'}_{n_2}\cdot\text{'sd'}_2)^2 }$ with $c=0.8$ and calculated $\hat m_{1,n_1}$ and $\hat m_{2,n_2}$ with local linear kernel estimators. 

Since the covariates $X_{1,i}$ (age of boys) and $X_{2,i}$ (age of girls) are not equal, methods (1)-(5) can not be applied to this data set. The covariates also do not have the same distribution, see Figure \ref{fig:BoysGirlsDensity}. As explained, method (1a) therefore presumably detects the change in the difference of distributions.

\cite{GasserEtal1984} investigate the modeling of growth curves for individual children and adolescents. They propose a nonparametric method and compare it to classic parametric approaches to model growth curves. They also identified the advent of puberty around the age of 12 for girls and 14 for boys as the main growth spurt, which corresponds with our change point estimates.

\begin{table}[h]
    \centering
    \begin{tabular}{c|c|c|c}
        & Method (1a) & Method (2a) & Method (3a) \\
        \hline
       $\hat\vartheta_{n_1,n_2}$  & 0.253 & 0.769 & 0.708 \\
       $\hat\vartheta_{n_1,n_2}(Age)$ & 9.33 & 16.01 & 15.22 \\
       $\hat\vartheta_{n_1,n_2}-\hat{bias}_{median}$ & 0.198 & 0.672  & 0.566\\
       $\big(\hat\vartheta_{n_1,n_2}-\hat{bias}_{median}\big)(Age)$ & 8.63  &  14.76 & 13.38 \\
       $\hat\vartheta_{n_1,n_2}-\hat{bias}_{mean}$ & 0.201  &  0.670 & 0.558 \\
       $\big(\hat\vartheta_{n_1,n_2}-\hat{bias}_{mean}\big)(Age)$ & 8.66 &  14.73 &  13.27
    \end{tabular}
    \caption{\it Estimation results for the change point in the height-age relation of girls vs boys}
    \label{tab:BoysGirls}
\end{table}

\begin{figure}
    \centering 
\includegraphics[width=0.8\textwidth]{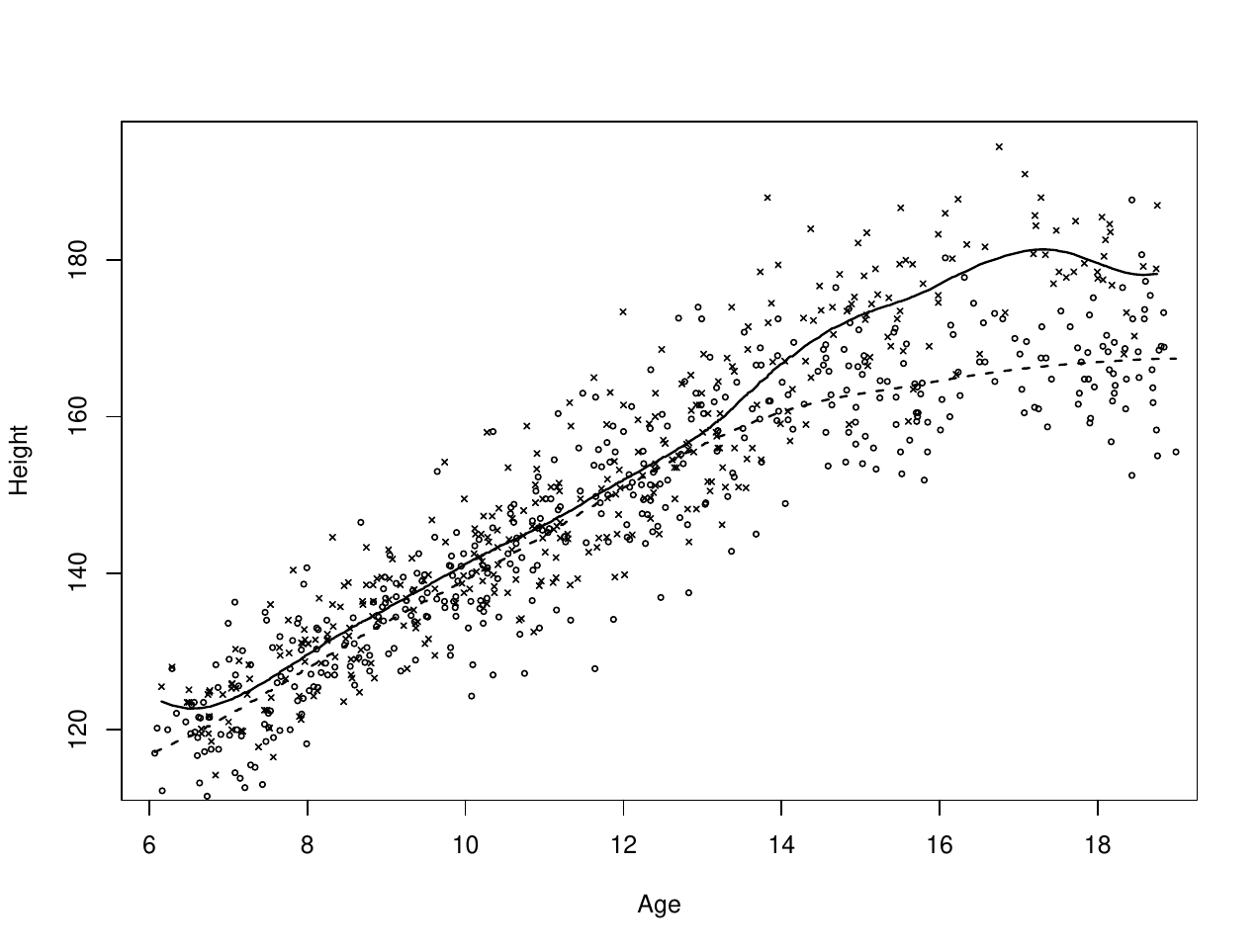}
\caption{\it The graph shows the age and height of 781 pupils, as well as the estimated regression function (local linear estimator). The circles and dashed line represent data from girls, and the crosses and solid line data from boys.}
    \label{fig:BoysGirls}
\end{figure}

\begin{figure}
    \centering 
\includegraphics[width=0.6\textwidth]{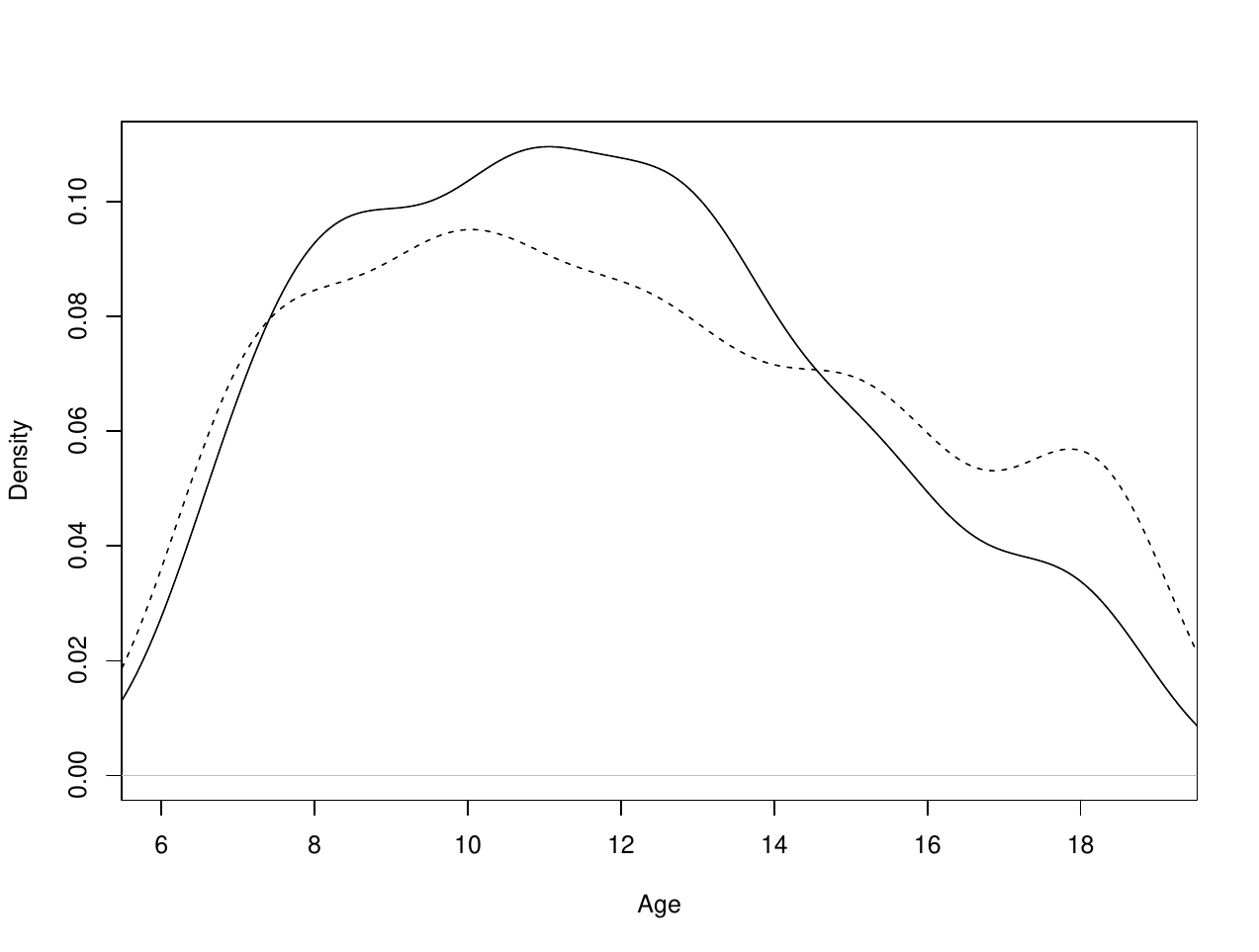}
\caption{\it The graph shows the estimated density of the age of girls (dashed line) and boys (solid line).}
    \label{fig:BoysGirlsDensity}
\end{figure}

\section{Concluding remarks}

In this article we suggested a general method to estimate the maximum of an argmin of some objective function in a case where  classical argmin procedures do not work. We applied it for estimating gradual changes of nonparametric regression functions for an independent sample. But Theorem \ref{theo1} could be applied under much more general assumptions on the regression model like for time series data, and also for very different estimation problems. For the suggested methods we showed consistency and rates of convergence. The next step will be deriving asymptotic distributions of the change point estimators. 
To estimate smooth changes in a nonparametric regression function one could also use an alternative method by checking for a jump in the derivative in particular in the case considered in Section \ref{sec-deriv}. 
Jumps in the derivative have been considered by \cite{Raimondo}, \cite{Gijbels-Goderniaux}, among others. 
We will consider a different method in a future work related to \cite{Antoch-etal}.

\begin{appendix}

\section{Proofs}

\subsection{Proof of Theorem \ref{theo1}}

Because $M$ is zero on $[0,\vartheta_0]$ and strictly positive on $(\vartheta_0,1]$ we obtain  
\begin{eqnarray*}
|\hat\vartheta_n-\vartheta_0|&=& \left|\int_0^1(I\{ M_n(\vartheta)\leq t_n\}-I\{\vartheta\in [0,\vartheta_0]\})\,d\vartheta\right|\\
&=& \Big|\int_0^{\vartheta_0}(I\{M_n(\vartheta)\leq t_n\}-1)\,d\vartheta
+\int_{\vartheta_0}^1 I\{M_n(\vartheta)\leq t_n\}\,d\vartheta\Big|\\
&\leq & \int_0^{\vartheta_0}I\{M_n(\vartheta)-M(\vartheta)>t_n\}\,d\vartheta +  \int_0^1 I\{M_n(\vartheta)\leq t_n\} I\{ 0< M(\vartheta)\}\,d\vartheta.
\end{eqnarray*}
Now with $\delta_n$ defined in Theorem \ref{theo1} we obtain for each $K>0$ by applying (\ref{tn})
\begin{eqnarray*}
&&\P\left( |\hat\vartheta_n-\vartheta_0|>K\delta_n\right)\\
&\leq &
\P\left( |\hat\vartheta_n-\vartheta_0|>K\delta_n, \sup_{\vartheta\in[0,1]}|M_n(\vartheta)-M(\vartheta)|\leq t_n\right) +o(1)
\\
&\leq&
\P\left( \int_0^1 I\{M_n(\vartheta)\leq t_n\} I\{ 0< M(\vartheta)\}\,d\vartheta>K\delta_n, \sup_{\vartheta\in[0,1]}|M_n(\vartheta)-M(\vartheta)|\leq t_n\right)\\
&&{} +o(1)\\
&\leq& \P\left( \int_0^1  I\{ 0< M(\vartheta)\leq 2 t_n\}\,d\vartheta>K\delta_n\right) +o(1).
\end{eqnarray*}
The integral inside the probability in the last line is not random. Note that we assume that $M$ is non-decreasing on $[0,1]$, $M(\vartheta)=0$ on $[0,\vartheta_0]$, and $M$ is strictly increasing on $(\vartheta_0,\vartheta_0+\Delta)$. Because $t_n$ converges to zero  we will have $2t_n<M(\vartheta_0+\Delta)$ for $n$ large enough, and the inequalities $0< M(\vartheta)\leq 2 t_n$ are equivalent to $\vartheta_0<\vartheta\leq M^{-1}(2t_n)$. Then the integral equals $M^{-1}(2t_n)-\vartheta_0=\delta_n$. Thus choosing $K\geq 1$ the probability is zero and we obtain the assertion. 
\hfill $\Box$

\subsection{Proof of Lemma \ref{lem-non-zero-regression}}

Here we use the notation $P=\mathbb{P}^X$, and $P_n$ for the empirical distribution of $X_1,\dots,X_n$. Then one can write
$$M_n(\vartheta)-M(\vartheta)= P_n g_\vartheta-Pg_\vartheta+P_nf_{\vartheta,\hat m_n-m}$$
with 
$$g_\vartheta(x)=m^2(x)I\{x\leq\vartheta\}$$
and
$$f_{\vartheta,h}(x)=h(x)(h(x)+2m(x))I\{x\leq\vartheta\}.$$
Then the function class $\mathcal{G}=\{ g_\vartheta\mid \vartheta\in [0,1]\}$ is a $P$-Donsker class by  Lemma 2.6.18(iv), \cite{vanderVaartWellner}, and thus
$$\sup_{\vartheta\in[0,1]}|P_ng_\vartheta-Pg_\vartheta|=O_{\mathbb{P}}(n^{-1/2}).$$
For the second term consider
$$P f_{\vartheta, h}=\int_0^\vartheta h(x)(h(x)+2m(x))\, dF_X(x).$$
Inserting $h=\hat m_n-m$ assumption (\ref{assumption1}) gives
$$ P f_{\vartheta, \hat m_n-m}=\int_0^\vartheta (\hat m_n^2(x)-m^2(x))\, dF_X(x)=O_{\mathbb{P}}(n^{-1/2})$$
uniformly in $\vartheta\in[0,1]$.
Then we obtain with assumption (\ref{assumption2})
$$\sup_{\vartheta\in[0,1]}|P_n f_{\vartheta, \hat m_n-m}|\leq \sup_{\vartheta\in[0,1],h\in \mathcal{H}} |P_n f_{\vartheta,h}-P f_{\vartheta,h}|+O_{\mathbb{P}}(n^{-1/2})=O_{\mathbb{P}}(n^{-1/2}).$$
The last step follows from the Donsker property in assumption (\ref{assumption2}) because  by Examples 2.5.4, 2.10.7, 2.10.8 in \cite{vanderVaartWellner} it follows that also 
 $\{f_{\vartheta,h}\mid \vartheta\in[0,1],h\in\mathcal{H}\}$ is a Donsker class. 
\hfill $\Box$

\subsection{Auxiliary Lemma to prove Lemma \ref{lem-deriv-pos}, Lemma \ref{lem-deriv} and Theorem \ref{theoregr}}
Before proving Lemma \ref{lem-deriv-pos} we state an auxiliary result that we will use to prove Lemma \ref{lem-deriv-pos}, Lemma \ref{lem-deriv} as well as Theorem \ref{theoregr}.
\begin{lemma}\label{lem-deriv-help}
Let $m(x)=g(x)(x-\vartheta_0)_+^q$ for some Lipschitz function $g:[0,1]\to\mathbb{R}_0^+$, and $\hat m_\vartheta(x)=\hat g_\vartheta(x)(x-\vartheta)_+^q$ with the kernel estimator $\hat g_\vartheta$ defined in (\ref{hat-g}), 
where the kernel $\kappa$ is one-sided with bounded support $[0,C_\kappa]$ and Lipschitz continuous. Further, let assumptions \eqref{ass1} and \eqref{ass2} from Subsection \ref{method4} hold.
Then for every $r_n\downarrow 0$ with $d_n=O(r_n)$ it holds
$$\sup_{\vartheta\in[0,1]}\sup_{x\in[\vartheta+r_n,1]}\big|\hat m_\vartheta(x)-m(x)\big|=O_{\mathbb{P}}(d_n^{q\wedge 1}r_n^{-(q\wedge 1)})+O_{\mathbb{P}}\Big(\sqrt{\frac{\log(n)}{nd_n}}\Big).$$
\end{lemma}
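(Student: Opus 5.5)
We split $\hat m_\vartheta(x)-m(x)$ into a stochastic term and a deterministic bias term. Write $w_i(x,\vartheta)=\kappa(\frac{X_i-x}{d_n})(X_i-\vartheta)_+^q$. Since $\kappa$ is one-sided with support $[0,C_\kappa]$, only $X_i\in[x,x+C_\kappa d_n]$ enter. For $x\ge\vartheta+r_n$ and $d_n=O(r_n)$ we have $(X_i-\vartheta)_+=X_i-\vartheta\ge r_n$, so all weights are strictly positive. Inserting $Y_i=m(X_i)+\eps_i$ gives
\begin{align*}
\hat m_\vartheta(x)-m(x)=(x-\vartheta)^q\,\frac{\sum_i \eps_i w_i(x,\vartheta)}{\sum_i w_i(x,\vartheta)(X_i-\vartheta)^q}
+\Big((x-\vartheta)^q\,\frac{\sum_i m(X_i) w_i(x,\vartheta)}{\sum_i w_i(x,\vartheta)(X_i-\vartheta)^q}-m(x)\Big).
\end{align*}

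For the bias term, note that it is a weighted average of $m(X_i)\,(x-\vartheta)^q/(X_i-\vartheta)^q-m(x)$ with nonnegative weights summing to one. For $|X_i-x|\le C_\kappa d_n$ we have
$$\Big|\frac{(x-\vartheta)^q}{(X_i-\vartheta)^q}-1\Big|\le C\Big(\frac{d_n}{x-\vartheta}\Big)^{q\wedge 1}\le C(d_n/r_n)^{q\wedge1}.$$
This follows from the mean value theorem for $q\ge1$, applied to $(1+u)^{-q}$ with $u=(X_i-x)/(x-\vartheta)\in[0,C_\kappa d_n/r_n]$; for $q<1$ the same argument gives the bound $Cu\le Cu^{q}$ because $u\le1$ eventually. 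Moreover $|m(X_i)-m(x)|\le C d_n^{q\wedge 1}$, since $m=g\cdot(\cdot-\vartheta_0)_+^q$ is Lipschitz for $q\ge1$ and $q$-Hölder for $q<1$ on $[0,1]$, with $g$ bounded and Lipschitz. Since $m$ is bounded, these two bounds give the deterministic order $O(d_n^{q\wedge1}r_n^{-(q\wedge1)})+O(d_n^{q\wedge1})$, uniformly in $\vartheta$ and $x$. The bound is non-random apart from the requirement that the weights are well defined.

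For the stochastic term, write it as
$$\frac{(nd_n)^{-1}\sum_i\eps_i\kappa(\frac{X_i-x}{d_n})\big(\frac{X_i-\vartheta}{x-\vartheta}\big)^q}{(nd_n)^{-1}\sum_i\kappa(\frac{X_i-x}{d_n})\big(\frac{X_i-\vartheta}{x-\vartheta}\big)^{2q}}.$$
By the same ratio bound, both factors $\big(\frac{X_i-\vartheta}{x-\vartheta}\big)^{q}$ and $\big(\frac{X_i-\vartheta}{x-\vartheta}\big)^{2q}$ lie in $[1,1+C]$ on the support. Uniform kernel-density arguments under \eqref{ass1} then show that the denominator is bounded below by a positive constant with probability tending to one, uniformly in $(x,\vartheta)$. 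The numerator is a kernel-weighted error sum indexed by the two parameters $(x,\vartheta)$. We bound its supremum by $O_\P(\sqrt{\log n/(nd_n)})$ as follows.
\begin{itemize}
\item Cover $\{(x,\vartheta)\}$ by a grid of polynomially many points, say $n^{4}$.
\item Apply Bernstein's inequality at each grid point. The moment condition \eqref{ass2} is exactly Bernstein's moment condition and gives exponential tails without truncation.
\item Take a union bound over the grid, which costs only a $\log n$ factor.
\item Control the increments between grid points by the Lipschitz continuity of $\kappa$ and of $\vartheta\mapsto (X_i-\vartheta)^q/(x-\vartheta)^q$ on the relevant region. Combined with $\frac1n\sum|\eps_i|=O_\P(1)$, the resulting increment is negligible.
\end{itemize}

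We expect the main obstacle to be this uniform bound over the two parameters $(x,\vartheta)$. The Lipschitz constant in $\vartheta$ of the normalized weights blows up like $r_n^{-1}$ as $x-\vartheta\downarrow r_n$. This has to be absorbed by taking the grid mesh polynomially small, which remains harmless for the union bound. The other delicate point is checking that the ratio bound for $q<1$ produces exactly the exponent $q\wedge1$ in $d_n^{q\wedge1}r_n^{-(q\wedge1)}$.
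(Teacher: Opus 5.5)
Your proposal follows essentially the paper's proof: the same noise/bias split (you merge the paper's $g$-variation term and $(\cdot)^q$-mismatch term into a single weighted average with weights summing to one, which conveniently makes the bias bound deterministic), the same lower bound on the denominator via $X_i\ge x$ for the one-sided kernel, and the same grid, Bernstein-inequality and union-bound argument, with Lipschitz control of the increments and $\frac1n\sum_i|\eps_i|=O_{\P}(1)$, for the noise term. One small slip: under $d_n=O(r_n)$ you only know that $u\le C_\kappa d_n/r_n$ is bounded, not that $u\le 1$ eventually; this does not matter, because for $q\le 1$ and all $u\ge 0$ one has $1-(1+u)^{-q}\le\min(1,qu)\le u^q$, so the exponent $q\wedge 1$ comes out regardless.
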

\textit{Proof:}
It holds
\begin{align}
\hat m_\vartheta&(x)-m(x)
= 
\frac{\frac 1{nd_n}\sum_{i=1}^n Y_i\kappa\big(\frac{X_i-x}{d_n}\big)(X_i-\vartheta)_+^q(x-\vartheta)_+^q}{\frac 1{nd_n}\sum_{i=1}^n \kappa\big(\frac{X_i-x}{d_n}\big)(X_i-\vartheta)_+^{2q}}-m(x)\nonumber\\
=&\ \frac{\frac 1{nd_n}\sum_{i=1}^n \varepsilon_i\kappa\big(\frac{X_i-x}{d_n}\big)(X_i-\vartheta)_+^q(x-\vartheta)_+^q}{\frac 1{nd_n}\sum_{i=1}^n \kappa\big(\frac{X_i-x}{d_n}\big)(X_i-\vartheta)_+^{2q}}\nonumber\\
&+\frac{\frac 1{nd_n}\sum_{i=1}^n (g(X_i)-g(x))\kappa\big(\frac{X_i-x}{d_n}\big)(X_i-\vartheta)_+^{q}(X_i-\vartheta_0)_+^{q}(x-\vartheta)_+^q}{\frac 1{nd_n}\sum_{i=1}^n \kappa\big(\frac{X_i-x}{d_n}\big)(X_i-\vartheta)_+^{2q}}\nonumber\\
&+ g(x) \frac{\frac 1{nd_n}\sum_{i=1}^n \kappa\big(\frac{X_i-x}{d_n}\big)(X_i-\vartheta)_+^q\big((X_i-\vartheta_0)_+^{q}(x-\vartheta)_+^q-(X_i-\vartheta)_+^q(x-\vartheta_0)_+^{q}\big)}{\frac 1{nd_n}\sum_{i=1}^n \kappa\big(\frac{X_i-x}{d_n}\big)(X_i-\vartheta)_+^{2q}}\nonumber,
\end{align}
which follows remembering that $Y_i=g(X_i)(X_i-\vartheta_0)_+^q+\varepsilon_i$ and $m(x)=g(x)(x-\vartheta_0)_+^q$.
The random denominator can be replaced by the deterministic $\frac 1{d_n}E\big[\kappa\big(\frac{X_1-x}{d_n}\big)\big](x-\vartheta)^{2q}$, since using that $\kappa$ is a one-sided kernel we get
\begin{align*}
&\inf_{x\in[0,1],\vartheta\in[0,1]:x>\vartheta}\bigg|\frac{\frac 1{nd_n}\sum_{i=1}^n\kappa\Big(\frac{X_i-x}{d_n}\Big)(X_i-\vartheta)^{2q}}{\frac 1{d_n}E\Big[\kappa\Big(\frac{X_1-x}{d_n}\Big)\Big](x-\vartheta)^{2q}}\bigg|\\
\geq&\ \inf_{x\in[0,1]}\bigg|\frac{\frac 1{nd_n}\sum_{i=1}^n\kappa\Big(\frac{X_i-x}{d_n}\Big)}{\frac 1{d_n}E\Big[\kappa\Big(\frac{X_1-x}{d_n}\Big)\Big]}\bigg|\\
\geq&\ 1-\frac{\sup_{x\in[0,1]}\Big|\frac 1{nd_n}\sum_{i=1}^n\kappa\Big(\frac{X_i-x}{d_n}\Big)-\frac 1{d_n}E\Big[\kappa\Big(\frac{X_1-x}{d_n}\Big)\Big]\Big|}{\inf_{x\in[0,1]}\frac 1{d_n}E\Big[\kappa\Big(\frac{X_1-x}{d_n}\Big)\Big]}\\
=&\ 1+o_{\P}(1).
\end{align*}
In the following we will only consider the denominator $(x-\vartheta)_+^{2q}$, since $\frac 1{d_n}E\big[\kappa\big(\frac{X_1-x}{d_n}\big)\big]=\int \kappa(u)f_X(ud_n+x)du\approx f_X(x)$ and $\inf_{x\in[0,1]}f_X(x)>0$.

For $q\in (0,1]$ the function $x\mapsto x^q$ is $q$-H\"older continuous with $|a^q-b^q|\leq |a-b|^q$ and for $q>1$ with the mean value theorem one obtains $|a^q-b^q|\leq q(|a|^{q-1}\vee |b|^{q-1})|a-b|$.

Thus, for $\vartheta<x\leq X_i$,
\begin{align}\nonumber
\Big(\frac{X_i-\vartheta}{x-\vartheta}\Big)^{q}-1&\leq (q\vee 1) \Big(\frac{X_i-\vartheta}{x-\vartheta}\Big)^{(q-1)\vee 0}\Big(\frac{X_i-x}{x-\vartheta}\Big)^{q\wedge 1}\\
\label{regression4}&\leq (q2^{q-2}\vee 1)\Big(1+\Big(\frac{X_i-x}{x-\vartheta}\Big)^{(q-1)\vee 1}\Big)\Big(\frac{X_i-x}{x-\vartheta}\Big)^{q\wedge 1}
\end{align}
using that $|a+b|^q\leq 2^{q-1}|a|^q+2^{q-1}|b|^q$ for all $a,b\in\mathbb{R}$, $q>1$, 
and that $(1+a)^q\leq 1+a$ for $q\in(0,1]$, $a>0$.

Further, with \eqref{regression4}, we get 
\begin{align*}
&\sup_{\vartheta\in[0,1],x\in[\vartheta+r_n,1]}\bigg|\frac{\frac 1{nd_n}\sum_{i=1}^n (g(X_i)-g(x))\kappa\big(\frac{X_i-x}{d_n}\big)(X_i-\vartheta)_+^{q}(X_i-\vartheta_0)_+^{q}(x-\vartheta)_+^q}{(x-\vartheta)_+^{2q}}\bigg|\\
\leq&\ \sup_{\vartheta\in[0,1],x\in[\vartheta+r_n,1]}\frac 1{nd_n}\sum_{i=1}^n \kappa\Big(\frac{X_i-x}{d_n}\Big)|g(X_i)-g(x)|\Big(\frac{X_i-\vartheta}{x-\vartheta}\Big)^{q}\\
=&\ O(d_n)\cdot O(1+d_n^{q\wedge 1}r_n^{-(q\wedge 1)})\cdot \sup_{x\in[0,1]}\frac 1{nd_n}\sum_{i=1}^n \kappa\Big(\frac{X_i-x}{d_n}\Big)\\
=&O_{\P}(d_n)
\end{align*}
since $\kappa(\frac{X_i-x}{d_n})=0$ if $X_i-x>C_\kappa d_n$ for some $C_\kappa<\infty$, $g$ is Lipschitz by assumption and $d_n=O(r_n)$.
Analogously
\begin{align*}
&\sup_{\vartheta\in[0,1],x\in[\vartheta+r_n,1]}\bigg|g(x) \frac{\frac 1{nd_n}\sum_{i=1}^n \kappa\big(\frac{X_i-x}{d_n}\big)(X_i-\vartheta)_+^q\big((X_i-\vartheta_0)_+^{q}(x-\vartheta)_+^q-(X_i-\vartheta)_+^q(x-\vartheta_0)_+^{q}\big)}{(x-\vartheta)_+^{2q}}\bigg|\\
\leq&\ \sup_{\vartheta\in[0,1],x\in[\vartheta+r_n,1]}\bigg|g(x) \frac{\frac 1{nd_n}\sum_{i=1}^n \kappa\big(\frac{X_i-x}{d_n}\big)(X_i-\vartheta)_+^q(x-\vartheta)_+^q\big((X_i-\vartheta_0)_+^{q}-(x-\vartheta_0)_+^{q}\big)}{(x-\vartheta)_+^{2q}}\bigg|\\
&+\sup_{\vartheta\in[0,1],x\in[\vartheta+r_n,1]}\bigg|g(x) \frac{\frac 1{nd_n}\sum_{i=1}^n \kappa\big(\frac{X_i-x}{d_n}\big)(X_i-\vartheta)_+^q(x-\vartheta_0)_+^q\big((x-\vartheta)_+^q-(X_i-\vartheta)_+^q\big)}{(x-\vartheta)_+^{2q}}\bigg|\\
\leq&\ O_{\P}(d_n^{q\wedge 1})\\
&+\sup_{x\in[0,1]}|g(x)| \sup_{\vartheta\in[0,1],x\in[\vartheta+r_n,1]}\frac 1{nd_n}\sum_{i=1}^n \kappa\Big(\frac{X_i-x}{d_n}\Big)\Big(\frac{X_i-\vartheta}{x-\vartheta}\Big)^{q}\Big(\Big(\frac{X_i-\vartheta}{x-\vartheta}\Big)^{q}-1\Big)\\
=&\ O_{\P}(d_n^{q\wedge 1})+O_{\P}(d_n^{q\wedge 1}r_n^{-(q\wedge 1)})= O_{\P}(d_n^{q\wedge 1}r_n^{-(q\wedge 1)}),
\end{align*}
where we again used equation \eqref{regression4}.

For the remaining term we show that
\begin{align}
\sup_{\vartheta\in[0,1],x\in[\vartheta+r_n,1]}\bigg|\frac{\frac 1{nd_n}\sum_{i=1}^n \varepsilon_i\kappa\big(\frac{X_i-x}{d_n}\big)(X_i-\vartheta)_+^q(x-\vartheta)_+^q}{(x-\vartheta)_+^{2q}}\bigg|=O_{\P}(n^{-1/2}d_n^{-1/2}\log(n)^{1/2}).\label{regression2}
\end{align}
Let $\vartheta_1,\ldots,\vartheta_{N(\epsilon)}$ be an $\epsilon$-net for $[0,1]$ with $\epsilon^{q\wedge 1}=n^{-\frac 12}d_n^{\frac 32}r_n^{q\wedge 1}$. 
For all $\vartheta\in[0,1]$ set $\vartheta_{j(\vartheta)}=\arg\min_{\vartheta'\in\{\vartheta_1,\ldots,\vartheta_{N(\epsilon)}\}}|\vartheta-\vartheta'|$. In the same manner build an $\epsilon$-net $x_1,\ldots,x_{N(\epsilon)}$. With this, since $\kappa$ is Lipschitz by assumption and $|x_{j(x)}-\vartheta_{j(\vartheta)}|\geq |x-\vartheta|-2\epsilon$, it holds for some constant $0<C<\infty$,
\begin{align*}
&\ \sup_{\vartheta\in[0,1],x\in[\vartheta+r_n,1]}\Big|\frac 1{nd_n}\sum_{i=1}^n\varepsilon_i\Big( \kappa\Big(\frac{X_i-x}{d_n}\Big)\Big(\frac{X_i-\vartheta}{x-\vartheta}\Big)^q-\kappa\Big(\frac{X_i-x_{j(x)}}{d_n}\Big)\Big(\frac{X_i-\vartheta_{j(\vartheta)}}{x_{j(x)}-\vartheta_{j(\vartheta)}}\Big)^q\Big)\Big|\\ 
\leq&\ \sup_{\vartheta\in[0,1],x\in[\vartheta+r_n,1]}\Big|\frac 1{nd_n}\sum_{i=1}^n\varepsilon_i\Big(\frac{X_i-\vartheta}{x-\vartheta}\Big)^q\Big( \kappa\Big(\frac{X_i-x}{d_n}\Big)-\kappa\Big(\frac{X_i-x_{j(x)}}{d_n}\Big)\Big)\Big|\\ 
&+\sup_{\vartheta\in[0,1],x\in[\vartheta+r_n,1]}\Big|\frac 1{nd_n}\sum_{i=1}^n\varepsilon_i\kappa\Big(\frac{X_i-x_{j(x)}}{d_n}\Big)\Big(\Big(\frac{X_i-\vartheta}{x-\vartheta}\Big)^q-\Big(\frac{X_i-\vartheta_{j(\vartheta)}}{x_{j(x)}-\vartheta_{j(\vartheta)}}\Big)^q\Big)\Big|\\ 
\leq&\  C\frac{\epsilon}{d_n^2}\frac 1n\sum_{i=1}^n |\varepsilon_i|\\
&+ \sup_{\vartheta\in[0,1],x\in[\vartheta+r_n,1]}\Big|\frac 1{nd_n}\sum_{i=1}^n\varepsilon_i \kappa\Big(\frac{X_i-x_{j(x)}}{d_n}\Big)\Big(\frac{X_i-\vartheta}{x-\vartheta}\Big)^q\,\frac{(x_{j(x)}-\vartheta_{j(\vartheta)})^q-(x-\vartheta)^q}{(x_{j(x)}-\vartheta_{j(\vartheta)})^q}\Big)\Big|\\
&+ \sup_{\vartheta\in[0,1],x\in[\vartheta+r_n,1]}\Big|\frac 1{nd_n}\sum_{i=1}^n\varepsilon_i \kappa\Big(\frac{X_i-x_{j(x)}}{d_n}\Big)\Big(\frac{x-\vartheta}{x_{j(x)}-\vartheta_{j(\vartheta)}}\Big)^q\,\frac{(X_i-\vartheta)^q-(X_i-\vartheta_{j(\vartheta)})^q}{(x-\vartheta)^q}\Big)\Big|\\
=&\ O\Big(\frac{\epsilon^{q\wedge 1}}{d_n^2(r_n-\epsilon)^{q\wedge 1}}\Big)\frac 1n\sum_{i=1}^n |\varepsilon_i| =\ O_{\P}(n^{-\frac 12}d_n^{-\frac 12})
\end{align*}
by the law of large numbers.
Further, we apply  a Bernstein inequality (e.\,g.\ Corollary A.8 in \cite{FerratyVieu}). Note that for all $l\geq 2$ it holds $E\big[\big(\frac 1{d_n}|\varepsilon|\kappa(\frac{x-X}{d_n})\big)^l\big]=O(d_n^{-(l-1)})$. Thus, for all $\eta>0$  and some constants $0<C,C'<\infty$ it holds
\begin{align*}
&\ \P\Big(\max_{\substack{j\in \{1,\ldots,N(\epsilon)\}\\k\in\{1,\ldots,N(\epsilon)\}:x_k>\vartheta_j+r_n-2\epsilon}}\Big|\frac 1{nd_n}\sum_{i=1}^n\varepsilon_i \Big( \kappa\Big(\frac{X_i-x_k}{d_n}\Big)\Big(\frac{X_i-\vartheta_j}{x_k-\vartheta_j}\Big)^q\Big|>\eta\sqrt{\frac{\log(N^2(\epsilon))}{nd_n}}\Big)\\
\leq&\ \ N^2(\epsilon)\max_{\substack{j\in \{1,\ldots,N(\epsilon)\}\\k\in\{1,\ldots,N(\epsilon)\}:x_k>\vartheta_j+r_n-2\epsilon}}\P\Big(\Big|\frac 1{nd_n}\sum_{i=1}^n\varepsilon_i \Big( \kappa\Big(\frac{X_i-x_k}{d_n}\Big)\Big(\frac{X_i-\vartheta_j}{x_k-\vartheta_j}\Big)^q\Big|>\eta\sqrt{\frac{\log(N^2(\epsilon))}{nd_n}}\Big)\\
\leq&\ N^2(\epsilon)2\exp\Big(-\frac{\eta^2\frac{\log(N^2(\epsilon))}{nd_n}n}{2\frac 1{d_n}(1+\eta \sqrt{\frac{\log(N^2(\epsilon))}{nd_n}})}\Big)\\
\leq&\ N^2(\epsilon)2\exp(-C'\eta^2\log(N^2(\epsilon)))\\
=&\ 2\exp((1-C'\eta^2)\log(N^2(\epsilon))
\end{align*}
which completes the proof of \eqref{regression2} since $\eta$ can be chosen such that $C'\eta^2>1$. 
\hfill$\Box$

\subsection{Proof of Lemma \ref{lem-deriv-pos} and Lemma \ref{lem-deriv}}

First, we consider 
\begin{align}
\hat m_\vartheta(x)-m(x)
=&\ 
\frac{\frac 1{nd_n}\sum_{i=1}^n Y_i\kappa\big(\frac{X_i-x}{d_n}\big)(X_i-\vartheta)_+^q(x-\vartheta)_+^q}{\frac 1{nd_n}\sum_{i=1}^n \kappa\big(\frac{X_i-x}{d_n}\big)(X_i-\vartheta)_+^{2q}}-m(x).\label{regression1}
\end{align}
If $x\leq \vartheta_0\wedge\vartheta$ the term \eqref{regression1} is zero. If $\vartheta_0\leq x\leq \vartheta$ it holds $\eqref{regression1}=-m(x)$. In Lemma \ref{lem-deriv-help} we have shown that,
for every $r_n\downarrow 0$ with $d_n=O(r_n)$,
$$\sup_{\vartheta\in[0,1],x\in[\vartheta+r_n,1]}\big|\hat m_\vartheta(x)-m(x)\big|=O_{\mathbb{P}}(d_n^{q\wedge 1}r_n^{-(q\wedge 1)})+O_{\mathbb{P}}\Big(\sqrt{\frac{\log(n)}{nd_n}}\Big).$$
Thus, applying these bounds for \eqref{regression1} on $|M_n-M|$, we get in the case of Lemma \ref{lem-deriv-pos}
\begin{align}
&\sup_{\vartheta\in[0,1]}|M_n(\vartheta)-M(\vartheta)|\nonumber\\
=&\ \sup_{\vartheta\in[0,1]}\bigg|\frac 1n\sum_{j=1}^n \left(Y_j-\hat m_{\vartheta}(X_j)\right)-\int_{\vartheta_0}^{\vartheta}m(x)dF_X(x)I\{\vartheta_0<\vartheta\}\bigg|\nonumber\\
=&\ \sup_{\vartheta\in[0,1]}\bigg|\frac 1n\sum_{j=1}^n \Big(m(X_j)+\varepsilon_j- \hat m_\vartheta(X_j)\Big)
-\int_{\vartheta_0}^{\vartheta}m(x)dF_X(x)I\{\vartheta_0<\vartheta\}\bigg|\nonumber\\
\leq&\  \underbrace{\bigg|\frac 1n\sum_{j=1}^n\varepsilon_j\bigg|}_{=O_{\P}(n^{-\frac 12})}\nonumber\\
&+\sup_{\vartheta\in[\vartheta_0,1]}\bigg|\frac 1n\sum_{j=1}^n m(X_j)I\{\vartheta_0\leq X_j\leq \vartheta\}-\int_{\vartheta_0}^{\vartheta}m(x)dF_X(x)\bigg|\nonumber\\
&+\sup_{\vartheta\in[0,1]} \bigg|\frac 1n\sum_{j=1}^n \bigg(m(X_j)-\frac{\frac 1{nd_n}\sum_{i=1}^n Y_i\kappa\big(\frac{X_i-X_j}{d_n}\big)(X_i-\vartheta)_+^q(X_j-\vartheta)_+^q}{\frac 1{nd_n}\sum_{i=1}^n \kappa\big(\frac{X_i-X_j}{d_n})(X_i-\vartheta)_+^{2q}}\bigg)\nonumber\\
&\hspace{10cm}I\{\vartheta\leq X_j\leq \vartheta+r_n\}\bigg|\label{regression5}\\
&+O_{\mathbb{P}}(d_n^{q\wedge 1}r_n^{-(q\wedge 1)})+O_{\P}\Big(\sqrt{\frac{\log(n)}{nd_n}}\Big)\nonumber,
\end{align}
and very similar in the case of Lemma \ref{lem-deriv}
\begin{align}
&\sup_{\vartheta\in[0,1]}|M_n(\vartheta)-M(\vartheta)|\nonumber\\
=&\ \sup_{\vartheta\in[0,1]}\bigg|\frac 1n\sum_{j=1}^n \left(Y_j-\hat m_{\vartheta}(X_j)\right)^2-\hat\sigma_n^2-\int_{\vartheta_0}^{\vartheta}m^2(x)dF_X(x)I\{\vartheta_0<\vartheta\}\bigg|\nonumber\\
=&\ \sup_{\vartheta\in[0,1]}\bigg|\frac 1n\sum_{j=1}^n \Big(m(X_j)+\varepsilon_j- \hat m_\vartheta(X_j)\Big)^2
-\hat\sigma_n^2-\int_{\vartheta_0}^{\vartheta}m^2(x)dF_X(x)I\{\vartheta_0<\vartheta\}\bigg|\nonumber\\
\leq&\ 2 \underbrace{\bigg|\frac 1n\sum_{j=1}^n\varepsilon_j^2-\hat\sigma_n^2\bigg|}_{=O(|\hat\sigma_n^2-\sigma^2|)+O_{\P}(n^{-\frac 12})}\nonumber\\
&+2\sup_{\vartheta\in[\vartheta_0,1]}\bigg|\frac 1n\sum_{j=1}^n m^2(X_j)I\{\vartheta_0\leq X_j\leq \vartheta\}-\int_{\vartheta_0}^{\vartheta}m^2(x)dF_X(x)\bigg|\nonumber\\
&+2\sup_{\vartheta\in[0,1]} \bigg|\frac 1n\sum_{j=1}^n \bigg(m(X_j)-\frac{\frac 1{nd_n}\sum_{i=1}^n Y_i\kappa\big(\frac{X_i-X_j}{d_n}\big)(X_i-\vartheta)_+^q(X_j-\vartheta)_+^q}{\frac 1{nd_n}\sum_{i=1}^n \kappa\big(\frac{X_i-X_j}{d_n})(X_i-\vartheta)_+^{2q}}\bigg)^2\nonumber\\
&\hspace{10cm}I\{\vartheta\leq X_j\leq \vartheta+r_n\}\bigg|\label{regression3}\\
&+O_{\mathbb{P}}(d_n^{2(q\wedge 1)}r_n^{-2(q\wedge 1)})+O_{\P}\Big(\frac{\log(n)}{nd_n}\Big)\nonumber.
\end{align}
The function classes
$$\big\{m(\cdot)I\{\vartheta_0\leq \cdot\leq \vartheta\}:\vartheta\in[\vartheta_0,1]\big\}\text{ and }\big\{m^2(\cdot)I\{\vartheta_0\leq \cdot\leq \vartheta\}:\vartheta\in[\vartheta_0,1]\big\}$$
are Donsker. The process is very similar to the classical empirical distribution function and we can e.\,g.\ adapt example 19.6 in \cite{vanderVaart} to prove the Donsker property.
Thus, with the continuous mapping theorem, we get $\sup_{\vartheta\in[\vartheta_0,1]}\big|\frac 1n\sum_{j=1}^n m(X_j)I\{\vartheta_0\leq X_j\leq \vartheta\}-\int_{\vartheta_0}^{\vartheta}m(x)dF_X(x)\big|=O_{\P}(n^{-1/2})$ and the same with $m$ replaced by $m^2$. Analogously we get $\sup_{\vartheta\in[\vartheta_0,1]}\big|\frac 1n\sum_{j=1}^n I\{\vartheta\leq X_j\leq \vartheta+r_n\}-\int_{\vartheta}^{\vartheta+r_n}dF_X(x)\big|=O_{\P}(n^{-1/2})$. Thus, the terms \eqref{regression5} and \eqref{regression3} can be bounded by
\begin{align*}
2\big(2\sup_{x\in[0,1]}|m(x)|+\max_{i\in\{1,\ldots,n\}}|\varepsilon_i|\big)^2\cdot\Big(r_n+O_{\P}(n^{-1/2})\Big)=O_{\P}(\log(n)r_n),
\end{align*}
where the last equality follows with $\max_{i\in\{1,\ldots,n\}}|\varepsilon_i|^2=O_{\P}(\log(n))$, which can be shown again with a Bernstein inequality.
Choosing $r_n=d_n^{(q\wedge 1)/(1+(q\wedge 1))}\log(n)^{-1/(1+(q\wedge 1))}$ for Lemma \ref{lem-deriv-pos} and $r_n=d_n^{2(q\wedge 1)/(1+2(q\wedge 1))}\log(n)^{-1/(1+2(q\wedge 1))}$ for Lemma \ref{lem-deriv} we get the asserted rates.
\hfill$\Box$

\subsection{Proof of Theorem \ref{theoregr}}

From Lemma \ref{lem-deriv-help} we get
\begin{eqnarray*}
\sup_{x\in[\hat\vartheta_n+r_n,1]}\big|\hat m_n(x)-m(x)\big|
&=&\ \sup_{x\in[\hat\vartheta_n+r_n,1]}\big|\hat m_{\hat\vartheta_n}(x)-m(x)\big|\\
&\leq&\ \sup_{\vartheta\in[0,1]}\sup_{x\in[\vartheta+r_n,1]}\big|\hat m_{\vartheta}(x)-m(x)\big|\\
&=&\ O_{\mathbb{P}}(d_n^{q\wedge 1}r_n^{-(q\wedge 1)})+O_\mathbb{P}\big(n^{-1/2}d_n^{-1/2}\log(n)^{1/2}\big).
\end{eqnarray*}
For $x\leq \hat\vartheta_n\wedge \vartheta_0$ it holds $\hat m_n(x)=m(x)=0$, and for $\vartheta_0\leq x\leq \hat\vartheta_n$ $$\sup_{x\in[\vartheta_0,\hat\vartheta_n]}|\hat m_{\hat\vartheta_n}(x)-m(x)|= \sup_{x\in[\vartheta_0,\hat\vartheta_n]}|m(x)|\leq\sup_{x\in[0,1]}|g(x)|(\hat\vartheta_n-\vartheta_0)^q=O_\mathbb{P}(\delta_n^q).$$
\hfill $\Box$

\begin{rem}
For $x\in(\hat\vartheta_n,\hat\vartheta_n+r_n)$ with $r_n$ from Theorem \ref{theoregr} it holds $$|m(x)|\leq \sup_{x\in[0,1]}|g(x)|(\hat\vartheta_n+r_n-\vartheta_0)_+^q=O_\mathbb{P}((r_n+\delta_n)^q)=O_\mathbb{P}(r_n^q)+O_\mathbb{P}(\delta_n^q),$$
and for $X_i\leq x+C_\kappa d_n$ it follows $|m(X_i)|=O_\mathbb{P}((r_n+\delta_n+d_n)^q)=O_\mathbb{P}(r_n^q)+O_\mathbb{P}(\delta_n^q)$. \\
 Moreover $\sup_{x\in[0,1]}\bigg|\frac{\frac 1{nd_n}\sum_{i=1}^n \kappa\big(\frac{X_i-x}{d_n}\big)(X_i-\hat\vartheta_n)_+^q(x-\hat\vartheta_n)_+^q}{\frac 1{nd_n}\sum_{i=1}^n \kappa\big(\frac{X_i-x}{d_n}\big)(X_i-\hat\vartheta_n)_+^{2q}}\bigg|\leq 1$.
Thus,
\begin{align*}
&\sup_{x\in[0,1]}|\hat m_n(x)-m(x)|\\\leq&\ O_\mathbb{P}(r_n^q)+O_{\P}(n^{-1/2}d_n^{-1/2}\log(n)^{1/2})+O_{\P}(\delta_n^q)+O_{\P}(d_n^{q\wedge 1}r_n^{-(q\wedge 1)})\\
&+\underbrace{\sup_{x\in(\hat\vartheta_n,\hat\vartheta_n+r_n)}\bigg|\frac{\frac 1{nd_n}\sum_{i=1}^n \varepsilon_i\kappa\big(\frac{X_i-x}{d_n}\big)(X_i-\hat\vartheta_n)_+^q(x-\hat\vartheta_n)_+^q}{\frac 1{nd_n}\sum_{i=1}^n \kappa\big(\frac{X_i-x}{d_n}\big)(X_i-\hat\vartheta_n)_+^{2q}}\bigg|}_{=O\Big( \sup_{x\in(\hat\vartheta_n,\hat\vartheta_n+r_n)}\Big|\frac 1{nd_n}\sum_{i=1}^n \varepsilon_i\kappa\big(\frac{X_i-x}{d_n}\big)\Big(\frac{X_i-\hat\vartheta_n}{x-\hat\vartheta_n}\Big)^q\Big|\Big)}.
\end{align*}
Choosing $r_n=d_n^{(q\wedge 1)/(q+(q\wedge 1))}$ we get 
\begin{align*}
&\sup_{x\in[0,1]}|\hat m_n(x)-m(x)|\\\leq&\ O_\mathbb{P}(d_n^{\frac{q(q\wedge 1)}{q+(q\wedge 1)}})+O_\mathbb{P}(\delta_n^q)+O_{\P}(n^{-1/2}d_n^{-1/2}\log(n)^{1/2})\\
&+\sup_{x\in(\hat\vartheta_n,\hat\vartheta_n+d_n^{(q\wedge 1)/(q+(q\wedge 1))})}\bigg|\frac{\frac 1{nd_n}\sum_{i=1}^n \varepsilon_i\kappa\big(\frac{X_i-x}{d_n}\big)(X_i-\hat\vartheta_n)_+^q(x-\hat\vartheta_n)_+^q}{\frac 1{nd_n}\sum_{i=1}^n \kappa\big(\frac{X_i-x}{d_n}\big)(X_i-\hat\vartheta_n)_+^{2q}}\bigg|.
\end{align*}
\end{rem}

\subsection{Proof of Corollary \ref{cor:regression_fixed}}

For $x=\vartheta_0$ we get
$$\hat m_n(x)-m(x)=\hat m_n(\vartheta_0)=\frac{\frac{1}{nd_n^{2q+1}}\sum_{i=1}^n Y_i\kappa\big(\frac{X_i-\vartheta_0}{d_n}\big)(X_i-\hat\vartheta_n)_+^q}{\frac{1}{nd_n^{2q+1}}\sum_{i=1}^n \kappa\big(\frac{X_i-\vartheta_0}{d_n}\big)(X_i-\hat\vartheta_n)_+^{2q}}(\vartheta_0-\hat\vartheta_n)_+^q.$$
Due to the factor one only considers the case $\vartheta_0>\hat\vartheta_n$, and due to the one-sided kernel only the covariates $X_i>\vartheta_0$. Thus $X_i-\hat\vartheta_n\geq X_i-\vartheta_0$, and the expectation of the non-negative denominator has  a lower bound
$$
\frac{1}{d_n^{2q+1}}\int \kappa\big(\frac{x-\vartheta_0}{d_n}\big)(x-\vartheta_0)_+^{2q}f_X(x)\, dx =\int \kappa(u) u^{2q}f_X(\vartheta_0+d_nu)\,du =c+o(1)$$
under the assumption $c= \int \kappa(u) u^{2q}\,du>0$ and $f_X$ is bounded and continuous. The numerator can be written as $A_n+B_n+R_n$
with 
\begin{eqnarray*}
    A_n &=& \frac{1}{nd_n^{2q+1}}\sum_{i=1}^n m(X_i)\kappa\big(\frac{X_i-\vartheta_0}{d_n}\big)(X_i-\vartheta_0)_+^q\\
    B_n &=& \frac{1}{nd_n^{2q+1}}\sum_{i=1}^n \varepsilon_i\kappa\big(\frac{X_i-\vartheta_0}{d_n}\big)(X_i-\vartheta_0)_+^q\\
    R_n &=& \frac{1}{nd_n^{2q+1}}\sum_{i=1}^n Y_i\kappa\big(\frac{X_i-\vartheta_0}{d_n}\big)\left((X_i-\hat\vartheta_n)_+^q-(X_i-\vartheta_0)_+^q\right)
\end{eqnarray*}
and
$$|R_n|\leq \frac{1}{nd_n^{2q+1}}\sum_{i=1}^n |Y_i|\kappa\big(\frac{X_i-\vartheta_0}{d_n}\big)|\hat\vartheta_n-\vartheta_0|^q=O_\P(\frac{\delta_n^q}{d_n^{2q}})$$
in the case $q\in (0,1]$, and 
$$|R_n|\leq \frac{1}{nd_n^{2q+1}}\sum_{i=1}^n |Y_i|\kappa\big(\frac{X_i-\vartheta_0}{d_n}\big)|\hat\vartheta_n-\vartheta_0|=O_\P(\frac{\delta_n}{d_n^{2q}})$$
in the case $q>1$. Further 
$$E|A_n|\leq \frac{1}{d_n^{2q+1}}\int \kappa\big(\frac{x-\vartheta_0}{d_n}\big)(x-\vartheta_0)_+^{2q}|g(x)|f_X(x)\, dx=O(1),$$
and $E[B_n]=0$, 
$$\Var(B_n)=\frac{1}{nd_n^{4q+2}}\int \kappa^2\big(\frac{x-\vartheta_0}{d_n}\big)(x-\vartheta_0)_+^{2q}\sigma^2(x)f_X(x)\, dx=O(\frac{1}{nd_n^{2q+1}}).$$
Altogether we obtain 
$$\hat m_n(x)-m(x)=\hat m_n(\vartheta_0)=\left(O_\P(1)+O_\P\Big(\frac{1}{n^{1/2}d_n^{q+1/2}}\Big)+O_\mathbb{P}\Big(\frac{\delta_n^{q\wedge 1}}{d_n^{2q}}\Big)\right)O_\P(\delta_n^q).$$
\hfill $\Box$

\end{appendix}


\bibliographystyle{plainnat}
\bibliography{References.bib}

\end{document}